\newcommand{\Z}{\mathbb{Z}}
\newcommand{\R}{\mathbb{R}}
\newcommand{\C}{\mathbb{C}}
\newcommand{\F}{\mathbb{F}}
\newcommand{\8}{\infty}
\newcommand{\ph}{\frac{\pi}{2}}
\newcommand{\spa}{\mathrm{span}}
\newcommand{\re}{\mathrm{Re~}}
\newcommand{\diag}{\mathrm{diag}}
\newcommand{\Co}{\mathcal{C}}
\newcommand{\proj}{\mathrm{proj}}
\newcounter{dummy} \numberwithin{dummy}{section}
\newtheorem{theorem}[dummy]{Theorem}
\newtheorem{lemma}[dummy]{Lemma}
\newtheorem{proposition}[dummy]{Proposition}
\newtheorem{corollary}[dummy]{Corollary}
\theoremstyle{remark}
\newtheorem{remark}[dummy]{Remark}
\newtheorem{example}[dummy]{Example}
\begin{document}

\title{Isometry invariant Finsler metrics on Hilbert Spaces}
\author{Eugene Bilokopytov\footnote{Email address ievgen.bilokopytov@umanitoba.ca.}}
\maketitle

\begin{abstract}
In this paper we study isometry-invariant Finsler metrics on inner product spaces over $\R$ or $\C$, i.e. the Finsler metrics which do not change under the action of all isometries of the inner product space. We give a new proof of the analytic description of all such metrics. In this article the most general concept of the Finsler metric is considered without any additional assumptions that are usually built into its definition. However, we present refined versions of the described results for more specific classes of metrics, including the class of Riemannian metrics. Our main result states that for an isometry-invariant Finsler metric the only possible linear maps under which the metric is invariant are scalar multiples of isometries. Furthermore, we characterise the metrics invariant with respect to all linear maps of this type.

Finsler metric, unitary invariance, isometries, Riemannian metric;
MSC2010 53B40, 53C60, 58B20;
\end{abstract}

\section{Introduction}

Geometric objects are largely characterized by their symmetries. An important class of geometric objects is formed by manifolds equipped with Finsler metrics. The subject of this article is the class of such manifolds, which are located inside of a certain inner product space and share symmetries with the later. Namely, we are interested in the Finsler metrics that are invariant with respect to all isometries of the ambient space.

This paper is dedicated to two questions: finding an analytic description of the isometry-invariant Finsler metrics and determining which further symmetries such metrics can have. The definition and the concept of a Finsler metric substantially vary through the literature. In fact, our first question is already answered in the sense of one of these definitions, in \cite{zhou} for the real case, and in \cite{xz} for the complex case\footnote{Note, that a partial case is considered in \cite{mr} from a completely different standpoint.}. However, here we present a simpler proof and a slightly different formulation, and also deal with other definitions of the Finsler metric. Moreover, we are not confined to finite dimensions, as is the case in the cited papers. \\

Let $\F$ be the field of either real or complex numbers. Unless stated otherwise, we will treat these two cases simultaneously, and so, for example, the word "sesquilinear" for the real case means simply "bilinear". By an isometry we mean an $\F$-linear operator of an inner product space that preserves the inner product. This is equivalent to the assumption that the operator preserves the norm induced by the inner product. Note that the alternative name for the isometries of $\R^{n}$ is "\emph{orthogonal operators}", while the \textbf{surjective} isometries of a complex Hilbert space of any dimensions are called \emph{unitary operators}, or \emph{unitaries}. In the light of this fact, we will also call surjective isometries unitaries. Clearly, these terms coincide in finite dimensions.

Since all the manifolds under consideration are domains in an inner product space, for simplicity we will not invoke the language of tangent spaces.

Let $H$ be an inner product space over $\F$ and let $G$ be an open set in $H$. A \emph{non-symmetric Finsler metric} on $G$ is a function $\rho:G\times H\to\R$ such that $\rho_{g}\left(r h\right)=r\rho_{g}\left(h\right)$, for every $g\in G, h\in H, r\ge 0$. The term \emph{Finsler metric} will be reserved for functions $\rho:G\times H\to\R$ such that $\rho_{g}\left(r h\right)=\left|r\right|\rho_{g}\left(h\right)$, for every $g\in G, h\in H, r\in\F$. Note, that we do not require $\rho$ to be non-negative and we do not make any assumptions related to the smoothness or shape of $\rho$, including subadditivity.

A \textbf{non-negative} Finsler metric can be used to define a distance in two stages: first, for a $\Co^{1}$ curve $\gamma: \left[a,b\right]\to G$ define the length by $\int\limits_{a}^{b}\rho_{\gamma\left(t\right)}\left(\gamma'\left(t\right)\right)dt$; then the distance between two points is the infinum of the lengths of the curves that contain these points. For a detailed account on the Finsler metrics see for example \cite{ap}.

The terms "Riemannian metric" and "Hermitean metric" are much more settled and the standard definitions involve smoothness and positive definiteness. Hence, we will not assign any specific name for a scalar function $\sigma$ on $G\times H\times H$ such that $\sigma_{g}$ is a conjugate-symmetric sesquilinear form on $H$, for every $g\in G$. For every such function we can define an associated Finsler metric $\rho^{\sigma}_{g}\left(h\right)=\frac{\sigma_{g}\left(h,h\right)}{\sqrt{\sigma_{g}\left(h,h\right)}}$, and so all concepts related to Finsler metrics can be applied to $\sigma$. In particular, if $\sigma_{g}$ is positive definite for every $g\in G$, then $\rho^{\sigma}_{g}\left(h\right)=\sqrt{\sigma_{g}\left(h,h\right)}$ is a non-negative Finsler metric. \\

We will say that a function $\rho$ on $G\times H$ is \emph{invariant} with respect to a linear map $T:H\to H$ if $TG\subset G$ and $\rho_{Tg}\left(Th\right)=\rho_{g}\left(h\right)$ for any $g\in G, h\in H$. Then $T$ is called a \emph{symmetry} of $\rho$. Clearly, the symmetries of any function form a \emph{monoid}, while the \textbf{invertible} symmetries form a group. We will call $\rho$ \emph{isometry-invariant} (or \emph{isometrically symmetric}) if it is invariant with respect to all isometries. The invariance of functions on $G\times H\times H$ is defined similarly.\\

Let us depict the contents of the article. In Section \ref{1} we establish some descriptions of isometry-invariant Finsler metrics and study some of their properties. We also deal with isometry-invariant Riemannian/Hermitean metrics.

Section \ref{2} is dedicated to studying metrics which are even more symmetric, in the sense that apart of the isometry invariance, they are also invariant with respect to some other operators on $H$. We will see that the range of such possibilities is very limited. Namely, our main result, Theorem \ref{main} states that with one exception the only invariance compatible with the isometry-invariance is the invariance with respect to the \emph{congruencies} (non-zero constants times isometries). Consequently, it is important to study the congruence invariance, which for isometrically symmetric Finsler metrics is reduced to studying the invariance with respect to \emph{homotheties} (multiplications by constants), and we do it in the beginning of the section.

In Section \ref{3} we state the complementary results for the non-symmetric Finsler metric. The majority of the proofs are omitted since they are analogous to the proofs for the symmetric case.

\section{Isometry-invariant Finsler metrics}\label{1}

Let $G$ be an isometry-invariant open set in $H$, i.e. $G=\bigcup\limits_{r\in R}rS$, where $S$ is the unit sphere of $H$ and $R$ is an open set in $\left[0,+\8\right)$. Since a sphere of a zero radius is just a single point, the zero-vector is a "singularity" in our considerations. The following proposition allows us to remove it from the most of the further discussion.

\begin{proposition}\label{zero}
Let $0\in G$ and let $\rho$ be a scalar function on $G\times H$. Then $\rho$ is isometry-invariant if and only if $\rho\left|_{G\backslash\left\{0\right\}\times H}\right.$ is isometry-invariant and there is (a unique) $b\in\R$, such that $\rho_{0}\left(h\right)=b\|h\|$, for every $h\in H$.
\end{proposition}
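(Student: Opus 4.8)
The plan is to prove both directions of the equivalence, with the bulk of the work lying in the forward direction.

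For the forward direction, suppose $\rho$ is isometry-invariant on all of $G\times H$. The restriction to $\left(G\backslash\left\{0\right\}\right)\times H$ is trivially isometry-invariant, since the isometries fix the origin and hence map $G\backslash\left\{0\right\}$ into itself. The substance is to extract the constant $b$ governing $\rho_{0}$. The key observation is that the isometry group acts transitively on each sphere of fixed radius, and in particular on the unit sphere $S$. Indeed, for any two unit vectors $h_{1},h_{2}$ there is a (surjective) isometry $U$ with $Uh_{1}=h_{2}$; since $U0=0$, invariance gives $\rho_{0}\left(h_{2}\right)=\rho_{U0}\left(Uh_{1}\right)=\rho_{0}\left(h_{1}\right)$. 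Thus $\rho_{0}$ is constant on $S$; call this constant $b:=\rho_{0}\left(h\right)$ for $\|h\|=1$.

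It then remains to propagate this to all of $H$ via the homogeneity built into the definition of a Finsler metric. First I would handle a nonzero $h\in H$ by writing $h=\|h\|\cdot\frac{h}{\|h\|}$ and applying the scaling rule $\rho_{0}\left(rh'\right)=\left|r\right|\rho_{0}\left(h'\right)$ with $r=\|h\|\ge 0$ and $h'=h/\|h\|\in S$, yielding $\rho_{0}\left(h\right)=\|h\|\,\rho_{0}\left(h/\|h\|\right)=b\|h\|$. The case $h=0$ follows from $\rho_{0}\left(0\right)=\rho_{0}\left(0\cdot 0\right)=0=b\cdot\|0\|$. Uniqueness of $b$ is immediate, since evaluating at any fixed unit vector determines it.

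For the converse, assume the restriction is isometry-invariant and that $\rho_{0}\left(h\right)=b\|h\|$ for all $h$. I must verify $\rho_{Ug}\left(Uh\right)=\rho_{g}\left(h\right)$ for every isometry $U$ and every $g\in G$. If $g\ne 0$ this is exactly the assumed invariance of the restriction, since $Ug\ne 0$. If $g=0$, then $Ug=0$ as well, and the formula $\rho_{0}\left(Uh\right)=b\|Uh\|=b\|h\|=\rho_{0}\left(h\right)$ uses only that $U$ preserves the norm. I anticipate the only genuinely delicate point is the transitivity argument, where one must be careful that the relevant isometry exists in \emph{infinite} dimensions and need not be surjective in general; here it suffices that for any two unit vectors there is a surjective isometry carrying one to the other, which is standard and follows by extending the map on the two-dimensional (or real-two-dimensional) span to a unitary. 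Everything else is a routine unwinding of the homogeneity axiom.
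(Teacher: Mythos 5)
Your proof is correct and follows essentially the same route as the paper: both exploit the transitivity of the isometry group together with the homogeneity of $\rho_{0}$ to get $\rho_{0}\left(h\right)=b\|h\|$ (the paper simply picks one isometry sending $h$ to $\|h\|e$ and applies homogeneity at the end, whereas you first establish constancy on the unit sphere and then rescale). Your explicit verification of the converse and the remark about using a surjective isometry in infinite dimensions are fine but add nothing beyond what the paper dismisses as obvious.
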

\begin{proof}
The sufficiency is obvious; let us prove the necessity. Fix a unit vector $e\in H$. For any $h\in H$ there is an isometry $T$ such that $Th=\|h\|e$. Then  $$\rho_{0}\left(h\right)=\rho_{T0}\left(Th\right)=\rho_{0}\left(\|h\|e\right)=\|h\|\rho_{0}\left(e\right).$$ Thus, for $b=\rho_{0}\left(e\right)$ we obtain that $\rho_{0}\left(h\right)=b\|h\|$, for every $h\in H$.
\end{proof}

Our further discussion is based on the following lemma.

\begin{lemma}\label{ii}
Let $\alpha\in\R$, let $0\not\in G$ and let $\rho:G\times H\to\R$ be such that\\ $\rho_{g}\left(r h\right)=\left|r\right|^{\alpha}\rho_{g}\left(h\right)$, for every $g\in G, h\in H, r\in\F$. Then $\rho$ is isometry-invariant if and only if there is a (unique) function $\lambda:R\times\R^{2}\to\R$, such that:
\begin{itemize}
\item $\lambda_{r}\left(tp, tq\right)=t^{\alpha}\lambda_{r}\left(p,q\right)$ and $\lambda_{r}\left(-p, q\right)=\lambda_{r}\left(p, -q\right)=\lambda_{r}\left(p, q\right)$, for any\\ $r\in R, p,q\in\R, t\ge 0$;
\item $\rho_{g}\left(h\right)=\lambda_{\|g\|}\left(\left|\left<h,g\right>\right|,\sqrt{\|h\|^{2}\|g\|^{2}-\left|\left<h,g\right>\right|^{2}}\right)$, for every $g\in G, h\in H$.
\end{itemize}
\end{lemma}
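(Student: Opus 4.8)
The plan is to exploit that the full isometry group acts transitively on pairs of vectors with a prescribed Gram matrix, so that an isometry-invariant value $\rho_g(h)$ can depend only on the isometry invariants of the pair $(g,h)$, namely $\|g\|$, $\|h\|$ and $\left<h,g\right>$; the extra homogeneity $\rho_g(uh)=\rho_g(h)$ for unimodular $u\in\F$ then collapses the dependence on $\left<h,g\right>$ to its modulus. The two invariants actually used, $p=\left|\left<h,g\right>\right|$ and $q=\sqrt{\|h\|^2\|g\|^2-\left|\left<h,g\right>\right|^2}$, are (up to the factor $\|g\|$) the lengths of the components of $h$ parallel and orthogonal to $g$, and they scale linearly when $h$ is scaled by $t\ge0$; this is what produces the homogeneity of $\lambda$. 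Sufficiency is immediate: if the displayed formula holds, then any isometry $T$ preserves $\|g\|$, $\|Th\|=\|h\|$ and $\left|\left<Th,Tg\right>\right|=\left|\left<h,g\right>\right|$, hence preserves all three arguments and $\rho_{Tg}(Th)=\rho_g(h)$. So I would concentrate on necessity and uniqueness.

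To define $\lambda$, I would fix once and for all an orthonormal pair $e,f\in H$ (here assuming $\dim H\ge2$; if $\dim H=1$ then $q$ is identically $0$ and the statement degenerates). Since $G$ is a union of spheres about the origin and $e$ is a unit vector, $re\in G$ for every $r\in R$, so I may set
$$\lambda_r(p,q):=\rho_{re}\!\left(\tfrac{p}{r}e+\tfrac{q}{r}f\right),\qquad r\in R,\ p,q\ge0,$$
and extend to all of $\R^2$ by $\lambda_r(p,q):=\lambda_r(|p|,|q|)$. The evenness conditions then hold by construction, while the homogeneity $\rho_{re}(t\,k)=t^\alpha\rho_{re}(k)$ for $t\ge0$ gives $\lambda_r(tp,tq)=t^\alpha\lambda_r(p,q)$. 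Uniqueness is forced: the representation formula pins $\lambda_r$ on the first quadrant, and the evenness axioms determine its even extension.

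It then remains to verify the representation formula for arbitrary $g\in G,\ h\in H$. I would put $r=\|g\|\in R$, $p=\left|\left<h,g\right>\right|$ and $q=\sqrt{\|h\|^2\|g\|^2-p^2}$ (non-negative by Cauchy--Schwarz). First, choosing a unimodular $u\in\F$ so that $\left<uh,g\right>=p$ (a phase rotation of the scalar $\left<h,g\right>$) and using $\rho_g(uh)=\rho_g(h)$, I may replace $h$ by $h'':=uh$, for which $\left<h'',g\right>$ is real and equal to $p$ while $\|h''\|=\|h\|$. A direct computation shows that the pairs $(g,h'')$ and $\bigl(re,\tfrac{p}{r}e+\tfrac{q}{r}f\bigr)$ have the same Gram matrix $\left(\begin{smallmatrix} r^2 & p \\ p & \|h\|^2\end{smallmatrix}\right)$; the only nontrivial entry is the $(2,2)$ one, which matches precisely because $q^2=\|h\|^2r^2-p^2$. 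Consequently there is a linear isometry of $H$ carrying $g\mapsto re$ and $h''\mapsto\tfrac{p}{r}e+\tfrac{q}{r}f$, and applying the invariance of $\rho$ under it yields $\rho_g(h)=\rho_{re}(\tfrac{p}{r}e+\tfrac{q}{r}f)=\lambda_r(p,q)$.

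The hard part is precisely this last geometric realization step: turning ``equal Gram matrices'' into an honest isometry of all of $H$ (mapping the at-most-two-dimensional span of the pair as prescribed and extending it across the orthogonal complements, whose Hilbert dimensions agree), together with keeping track of the degenerate configurations --- $\left<h,g\right>=0$, $h$ parallel to $g$ (so $q=0$), and $\dim H\le1$. Everything else reduces to the homogeneity of $\rho$ and the routine bookkeeping of the Gram entries.
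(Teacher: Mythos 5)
Your proposal is correct and follows essentially the same route as the paper: both reduce the pair $\left(g,h\right)$ by an isometry to a normal form inside the span of a fixed orthonormal pair $e,f$ and define $\lambda_{r}$ via the values of $\rho$ there. The only cosmetic differences are that the paper constructs the isometry explicitly from the decomposition $h=\proj_{g}h+\proj_{g^{\bot}}h$ and absorbs the phase of $\left<h,g\right>$ afterwards using the homogeneity of $\rho_{g}$, and obtains the evenness of $\lambda_{r}$ from the isometry $e\mapsto e$, $f\mapsto -f$ rather than building it into the definition.
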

\begin{proof}
Sufficiency is obvious; let us show necessity. It is easy to see that there is nothing to prove in case when $\dim H\le 1$, so assume that $\dim H> 1$. Fix two orthogonal unit vectors $e,f\in H$. Then, for any $g\in G$ and $h\in H$ there is an isometry $T$ such that $Tg=\|g\|e$ and $$T\proj_{g^{\bot}}h=\frac{\left<h,g\right>}{\left|\left<h,g\right>\right|}\|\proj_{g^{\bot}}h\|f=\frac{\left<h,g\right>}{\left|\left<h,g\right>\right|}\sqrt{\|h\|^{2}-\frac{\left|\left<h,g\right>\right|^{2}}{\|g\|^{2}}}f$$ (if $g\bot h$, we take $\frac{\left<h,g\right>}{\left|\left<h,g\right>\right|}=1$). Then $T\proj_{g} h=T\frac{\left<h,g\right>}{\|g\|^{2}}g=\frac{\left<h,g\right>}{\|g\|^{2}}Tg=\frac{\left<h,g\right>}{\|g\|}e$, and so $$Th=\frac{\left<h,g\right>}{\|g\|}e+\frac{\left<h,g\right>}{\left|\left<h,g\right>\right|}\sqrt{\|h\|^{2}-\frac{\left|\left<h,g\right>\right|^{2}}{\|g\|^{2}}}f=
\frac{\left<h,g\right>}{\left|\left<h,g\right>\right|}\frac{\left|\left<h,g\right>\right|e+\sqrt{\|h\|^{2}\|g\|^{2}-\left|\left<h,g\right>\right|^{2}}f}{\|g\|}.
$$\begin{eqnarray*}
\rho_{g}\left(h\right)&=&\rho_{Tg}\left(Th\right)=\rho_{\|g\|e}\left(\frac{\left<h,g\right>}{\left|\left<h,g\right>\right|}\frac{1}{\|g\|}\left(\left|\left<h,g\right>\right|e+\sqrt{\|h\|^{2}\|g\|^{2}-\left|\left<h,g\right>\right|^{2}}f\right)\right)=\\
&=&\frac{1}{\|g\|^{\alpha}}\rho_{\|g\|e}\left(\left|\left<h,g\right>\right|e+\sqrt{\|h\|^{2}\|g\|^{2}-\left|\left<h,g\right>\right|^{2}}f\right).
\end{eqnarray*}

Thus for $\lambda_{r}\left(p,q\right)=\frac{1}{r^{\alpha}}\rho_{re}\left(pe+qf\right)$ we obtain that $$\rho_{g}\left(h\right)=\lambda_{\|g\|}\left(\left|\left<h,g\right>\right|,\sqrt{\|h\|^{2}\|g\|^{2}-\left|\left<h,g\right>\right|^{2}}\right),$$ for every $g\in G\backslash\left\{0\right\}, h\in H$. Additionally for every $r\in R, p,q,t\in\R$ we have that
$$\lambda_{r}\left(tp, tq\right)=\frac{1}{r^{\alpha}}\rho_{re}\left(t\left(pe+qf\right)\right)=\frac{1}{r^{\alpha}}\left|t\right|^{\alpha}\rho_{re}\left(pe+qf\right)=\left|t\right|^{\alpha}\lambda_{r}\left(p,q\right).$$

Since $e\bot f$, we can find an isometry $T$ such that $Te=e$ and $Tf=-f$. Then $$\lambda_{r}\left(p,-q\right)=\frac{1}{r^{\alpha}}\rho_{re}\left(pe-qf\right)=\frac{1}{r^{\alpha}}\rho_{rTe}\left(pTe-qTf\right)=\frac{1}{r^{\alpha}}\rho_{re}\left(pe+qf\right)=\lambda_{r}\left(p,q\right);$$ finally,  $\lambda_{r}\left(-p,q\right)=\lambda_{r}\left(\left(-1\right)\left(p,-q\right)\right)=1^{\alpha}\lambda_{r}\left(p,-q\right)=\lambda_{r}\left(p,q\right)$.
\end{proof}

As a particular case we obtain an analytic description of the isometry-invariant Finsler metrics.

\begin{proposition}\label{ui}
Let $\rho$ be a Finsler metric on $G\not\ni 0$. Then $\rho$ is isometry-invariant if and only if there is a (unique) function $\lambda:R\times\R^{2}\to\R$, such that:
\begin{itemize}
\item $\lambda_{r}\left(tp, tq\right)=t\lambda_{r}\left(p,q\right)$ and $\lambda_{r}\left(-p, q\right)=\lambda_{r}\left(p, -q\right)=\lambda_{r}\left(p, q\right)$, for any $r\in R, p,q\in\R, t\ge 0$;
\item $\rho_{g}\left(h\right)=\lambda_{\|g\|}\left(\left|\left<h,g\right>\right|,\sqrt{\|h\|^{2}\|g\|^{2}-\left|\left<h,g\right>\right|^{2}}\right)$, for every $g\in G, h\in H$.
\end{itemize}
\end{proposition}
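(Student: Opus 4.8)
The plan is to derive Proposition \ref{ui} as the special case $\alpha=1$ of Lemma \ref{ii}. The key observation is that a Finsler metric satisfies $\rho_g(rh)=|r|\rho_g(h)$ for every $r\in\F$, which is precisely the hypothesis of Lemma \ref{ii} with $\alpha=1$, since $|r|^{1}=|r|$. Thus the proposition is obtained by simply invoking the lemma with this particular value of $\alpha$, and no independent argument is needed.

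Concretely, I would first note that by the standing assumption $0\notin G$ the lemma is applicable, and observe that the homogeneity condition defining a Finsler metric, $\rho_g(rh)=|r|\rho_g(h)$, is exactly the condition $\rho_g(rh)=|r|^{\alpha}\rho_g(h)$ required in the lemma when $\alpha=1$. Substituting $\alpha=1$ into the conclusion of the lemma yields that $\rho$ is isometry-invariant if and only if there is a unique $\lambda:R\times\R^{2}\to\R$ with $\lambda_r(tp,tq)=t^{1}\lambda_r(p,q)=t\lambda_r(p,q)$ for $t\ge 0$, together with the sign-invariance $\lambda_r(-p,q)=\lambda_r(p,-q)=\lambda_r(p,q)$, and the integral representation $\rho_g(h)=\lambda_{\|g\|}\!\left(|\langle h,g\rangle|,\sqrt{\|h\|^{2}\|g\|^{2}-|\langle h,g\rangle|^{2}}\right)$. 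These are exactly the two bulleted conditions in the statement of the proposition.

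Since the argument is a direct specialization, there is essentially no obstacle; the only point worth stating explicitly is the verification that the defining property of a Finsler metric coincides with the $\alpha=1$ instance of the scaling hypothesis in the lemma. I would therefore present the proof as a one-line remark that the claim follows from Lemma \ref{ii} by taking $\alpha=1$.

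\begin{proof}
Since $\rho$ is a Finsler metric, we have $\rho_{g}\left(r h\right)=\left|r\right|\rho_{g}\left(h\right)=\left|r\right|^{1}\rho_{g}\left(h\right)$ for every $g\in G, h\in H, r\in\F$. As $0\not\in G$, the claim follows immediately by applying Lemma \ref{ii} with $\alpha=1$.
\end{proof}
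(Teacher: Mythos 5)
Your proof is correct and is exactly how the paper obtains this proposition: it is stated as "a particular case" of Lemma \ref{ii}, namely the specialization $\alpha=1$, with no further argument given. Nothing is missing.
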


From the above proposition it is clear that $\rho_{g}\left(h\right)$ depends on the "angle" between $g$ and $h$. Let us formalize this idea. For non-zero vectors $g,h$ define the \emph{acute angle} between them, i.e. the minimal angle between the \emph{real rays} lying on the $\F$-lines, containing them by $$\angle \left(g,h\right)=\angle \left(h,g\right)=\cos^{-1}\frac{\left|\left<h,g\right>\right|}{\|h\|\|g\|}.$$ Defining $\theta\left(r,\tau\right)=r\lambda_{r}\left(\cos \tau,\sin \tau\right)$ on $R\times\left[0,\ph\right]$ we obtain another analytic description of isometrically-symmetric Finsler metric.

\begin{corollary}\label{theta}
A Finsler metric $\rho$ on $G\not\ni 0$ is isometry-invariant if and only if there is a (unique) function $\theta:R\times\left[0,\ph\right]\to\R$ such that  $\rho_{g}\left(h\right)=\|h\|\theta\left(\|g\|,\angle \left(g,h\right)\right)$, for every $g\in G, h\in H\backslash \left\{0\right\}$.
\end{corollary}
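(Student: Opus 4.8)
The plan is to derive Corollary~\ref{theta} directly from Proposition~\ref{ui} by a change of variables. Proposition~\ref{ui} gives a function $\lambda:R\times\R^{2}\to\R$ with $\rho_{g}(h)=\lambda_{\|g\|}(|\langle h,g\rangle|,\sqrt{\|h\|^{2}\|g\|^{2}-|\langle h,g\rangle|^{2}})$. The idea is to factor out the dependence on $\|h\|$ and $\|g\|$ using the positive homogeneity $\lambda_{r}(tp,tq)=t\lambda_{r}(p,q)$, so that what remains depends only on the ratio encoding the angle. To this end, for fixed $g,h\neq 0$ write $\cos\tau=\frac{|\langle h,g\rangle|}{\|h\|\|g\|}$ and $\sin\tau=\frac{\sqrt{\|h\|^{2}\|g\|^{2}-|\langle h,g\rangle|^{2}}}{\|h\|\|g\|}$, where $\tau=\angle(g,h)\in[0,\frac{\pi}{2}]$; these are consistent since $\cos^{2}\tau+\sin^{2}\tau=1$ and both quantities are nonnegative.

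First I would apply homogeneity with $t=\|h\|\|g\|\ge 0$ to pull the scalar out:
\begin{eqnarray*}
\rho_{g}(h)&=&\lambda_{\|g\|}\left(\|h\|\|g\|\cos\tau,\ \|h\|\|g\|\sin\tau\right)\\
&=&\|h\|\|g\|\,\lambda_{\|g\|}\left(\cos\tau,\sin\tau\right).
\end{eqnarray*}
Then, with the definition $\theta(r,\tau)=r\lambda_{r}(\cos\tau,\sin\tau)$ supplied just before the corollary, substituting $r=\|g\|$ gives $\|g\|\,\lambda_{\|g\|}(\cos\tau,\sin\tau)=\theta(\|g\|,\angle(g,h))$, and hence $\rho_{g}(h)=\|h\|\,\theta(\|g\|,\angle(g,h))$, which is the desired formula. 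Conversely, any $\rho$ of this form is isometry-invariant because $\|h\|$, $\|g\|$, and $\angle(g,h)$ are all preserved by isometries, so the reverse implication is immediate.

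The only genuinely delicate point is the reduction to $\tau\in[0,\frac{\pi}{2}]$ and the corresponding restriction of the domain to $h\neq 0$: the angle $\angle(g,h)$ is only defined for nonzero vectors, which is why the statement excludes $h=0$, and one should note the case $h=0$ is anyway forced to give $\rho_{g}(0)=0$ by the homogeneity $\rho_{g}(rh)=|r|\rho_{g}(h)$ with $r=0$. I would also remark on uniqueness: since every value $\tau\in[0,\frac{\pi}{2}]$ and $r\in R$ is realized by some admissible pair $(g,h)$ (choosing $g$ with $\|g\|=r$ and $h$ making the prescribed angle, possible as $\dim H>1$, the trivial case $\dim H\le 1$ being handled separately), the function $\theta$ is determined by $\rho$, so it inherits uniqueness from the uniqueness of $\lambda$ in Proposition~\ref{ui}. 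The bijective correspondence between $\lambda_{r}$ on the first quadrant and $\theta(r,\cdot)$ on $[0,\frac{\pi}{2}]$ via polar coordinates is what makes both existence and uniqueness transfer cleanly, and I expect this bookkeeping — rather than any substantive estimate — to be the main thing to get right.
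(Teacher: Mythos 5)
Your proposal is correct and follows exactly the route the paper intends: the corollary is obtained from Proposition \ref{ui} by the substitution $\theta(r,\tau)=r\lambda_{r}(\cos\tau,\sin\tau)$ together with positive homogeneity of $\lambda_{r}$ applied with $t=\|h\|\|g\|$, which is precisely your computation. Your additional remarks on the excluded case $h=0$ and on how uniqueness of $\theta$ transfers from uniqueness of $\lambda$ via the polar-coordinate correspondence are accurate and, if anything, slightly more careful than the paper, which states the corollary without written proof.
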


\begin{remark}\label{dim}
It can be seen from the proof above, that it is enough to demand invariance only with respect to the group of \textbf{unitaries}. Moreover, if $\8>\dim H>2$ it is enough to demand only invariance with respect to the group \emph{rotations} (isometries of determinant $1$). In the light of this fact, the terms unitary- and isometry- invariant Finsler metrics are interchangeable.
\end{remark}

\begin{remark}\label{pr}
It is easy to see that $\lambda_{r}$ has a certain property if and only if $\rho_{g}$ has this property for every $g\in rS$ and if and only if $\rho_{re}$ has this property on some two-dimensional subspace containing some unit vector $e$. Examples of such properties include non-negativity, continuity, being a (semi)norm, smoothness etc. In particular, if we assume that $\rho_{g}$ is a seminorm for \textbf{some} $g\in rS$, we get that $\rho_{g}$ is a \textbf{continuous} seminorm for \textbf{all} $g\in rS$. The global properties of $\rho$ and $\lambda$ are also connected. For example, $\rho$ is continuous if and only if $\lambda$ is.
\end{remark}

\begin{remark}\label{dg}
If $\lambda_{r}$ it is a seminorm, then it is either a norm or $\lambda_{r}\left(p,q\right) = a\left|p\right|$, for some $a\ge 0$, or $\lambda_{r}\left(p,q\right) = b\left|q\right|$, for some $b\ge 0$. Indeed, the null-space of a seminorm is a subspace of $\R^{2}$, which in our case has to be symmetric with respect to both axis. Thus, it is either one of them, or the whole $\R^{2}$, or $\left\{0\right\}$. Note that if $\lambda_{r}\left(p,q\right) = a\left|p\right|$, then the length of any curve inside of $rS$ is $0$, and so $\rho$ glues all elements with norm equal to $r$. See Remark \ref{dg2} for further details.
\end{remark}

Let us deal with the isometry-invariant Riemannian/Hermitean metrics.

\begin{proposition}\label{rm1}
Let $0\not\in G$ and let $\sigma:G\times H\times H\to\F$ be such that $\sigma_{g}$ is conjugate-symmetric sesquilinear on $H$, for every $g\in G$. Then $\sigma$ is isometry-invariant if and only if there are (unique) functions $\varphi,\psi:R\to\R$, such that $\sigma_{g}\left(f,h\right)=\varphi\left(\|g\|^{2}\right)\left<f,h\right>+\psi\left(\|g\|^{2}\right)\left<f,g\right>\left<g,h\right>$, for $g\in G$, and $f,h\in H$. Moreover, in this case the following hold:
\item[(i)] $\sigma_{g}$ is positive definite for some (every) $g\in \sqrt{r}S$ if and only if $\varphi\left(r\right)> 0$ and $\varphi\left(r\right)+r\psi\left(r\right)> 0$;
\item[(ii)] the degree of smoothness of $\sigma$ coincides with the minimal degree of smoothness of $\varphi$ and $\psi$.
\end{proposition}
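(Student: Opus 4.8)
The plan is to dispatch sufficiency in one line and to spend the effort on necessity, which I will carry out by classifying the isometry-invariant form $\sigma_{g}$ one fixed $g$ at a time. As in Lemma \ref{ii}, the case $\dim H\le 1$ is degenerate (there the representation is not even unique, since $\left<f,g\right>\left<g,h\right>$ is proportional to $\left<f,h\right>$), so I assume $\dim H\ge 2$. For sufficiency, any isometry $T$ satisfies $\left<Tf,Th\right>=\left<f,h\right>$ and $\|Tg\|=\|g\|$, so substituting $Tf,Th,Tg$ into the proposed formula leaves every factor unchanged and gives $\sigma_{Tg}\left(Tf,Th\right)=\sigma_{g}\left(f,h\right)$.

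For necessity I fix $g\in G$ and exploit invariance under the stabilizer of $g$: if $Ug=g$ then $\sigma_{g}\left(Uf,Uh\right)=\sigma_{g}\left(f,h\right)$, and such $U$ are exactly the maps that are the identity on $\F g$ and an arbitrary isometry on $g^{\bot}$. Decomposing $H=\F g\oplus g^{\bot}$, I write $f=f_{0}+f_{1}$, $h=h_{0}+h_{1}$ with $f_{0},h_{0}\in\F g$ and $f_{1},h_{1}\in g^{\bot}$, and expand $\sigma_{g}\left(f,h\right)$ into four sesquilinear pieces. Feeding suitable $U$ into the invariance identity yields three facts. First, taking $U=-1$ on $g^{\bot}$ and $1$ on $\F g$ gives $\sigma_{g}\left(g,h_{1}\right)=-\sigma_{g}\left(g,h_{1}\right)=0$, so both mixed terms vanish (the second by conjugate-symmetry). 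Second, for orthonormal $u,v\in g^{\bot}$ the same reflection gives $\sigma_{g}\left(u,v\right)=0$, while an isometry of $g^{\bot}$ carrying $u$ to $v$ gives $\sigma_{g}\left(u,u\right)=\sigma_{g}\left(v,v\right)$; hence $\sigma_{g}$ restricted to $g^{\bot}$ equals $c_{g}\left<\cdot,\cdot\right>$ for a scalar $c_{g}=\sigma_{g}\left(u,u\right)$, which is real by conjugate-symmetry. Third, the surviving $\F g$-piece is automatically $\frac{\left<f,g\right>\left<g,h\right>}{\|g\|^{4}}\sigma_{g}\left(g,g\right)$. Recombining through $\left<f_{1},h_{1}\right>=\left<f,h\right>-\frac{\left<f,g\right>\left<g,h\right>}{\|g\|^{2}}$ produces $\sigma_{g}\left(f,h\right)=c_{g}\left<f,h\right>+\left(\frac{\sigma_{g}\left(g,g\right)}{\|g\|^{4}}-\frac{c_{g}}{\|g\|^{2}}\right)\left<f,g\right>\left<g,h\right>$, and transitivity of the isometry group on each sphere shows $c_{g}$ and $\sigma_{g}\left(g,g\right)$ depend only on $\|g\|$; setting $\varphi\left(\|g\|^{2}\right)=c_{g}$ and letting $\psi$ be the bracketed coefficient gives the required functions. (Alternatively one can apply Lemma \ref{ii} with $\alpha=2$ to the quadratic form $h\mapsto\sigma_{g}\left(h,h\right)$ and polarize, but the direct argument avoids that bookkeeping.)

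Uniqueness and the two addenda are then short. Evaluating at a unit $f=h\bot g$ gives $\varphi\left(\|g\|^{2}\right)=\sigma_{g}\left(f,f\right)$, and at $f=h=g$ gives $\sigma_{g}\left(g,g\right)=\varphi\left(\|g\|^{2}\right)\|g\|^{2}+\psi\left(\|g\|^{2}\right)\|g\|^{4}$, pinning down both functions. For (i), putting $f=h$ and decomposing $h=h_{0}+h_{1}$ with $\left|\left<h,g\right>\right|^{2}=\|g\|^{2}\|h_{0}\|^{2}$ yields $\sigma_{g}\left(h,h\right)=\left(\varphi\left(r\right)+r\psi\left(r\right)\right)\|h_{0}\|^{2}+\varphi\left(r\right)\|h_{1}\|^{2}$ with $r=\|g\|^{2}$, so positive-definiteness is equivalent to positivity of both coefficients, and the ``some/every'' clause follows because positive-definiteness is preserved by the transitive action on $\sqrt{r}S$. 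For (ii), if $\varphi,\psi$ are $\Co^{k}$ then $\sigma$ is $\Co^{k}$ because $g\mapsto\left<g,g\right>$ and the data $\left<f,h\right>,\left<f,g\right>,\left<g,h\right>$ are $\Co^{\8}$; conversely restricting $\sigma$ to the smooth curves $t\mapsto\left(\sqrt{t}e_{1},e_{2},e_{2}\right)$ and $t\mapsto\left(\sqrt{t}e_{1},e_{1},e_{1}\right)$ with fixed orthonormal $e_{1},e_{2}$ recovers $\varphi\left(t\right)$ and $\varphi\left(t\right)+t\psi\left(t\right)$ as $\Co^{k}$ functions (using $t>0$, as $0\notin G$), so $\varphi,\psi$ are $\Co^{k}$; the two implications give the claimed equality of smoothness degrees.

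The crux is the Schur-type second fact, that an isometry-invariant conjugate-symmetric form on $g^{\bot}$ is a scalar multiple of $\left<\cdot,\cdot\right>$. The one point requiring care is that we do not assume $\sigma_{g}$ is bounded, but every tool used (reflections, transitivity on the unit sphere, polarization) is purely pointwise and therefore survives in infinite dimensions. The rest is bookkeeping: tracking the conjugate-linear slot so that the reflection identities read $-\sigma=\sigma$ rather than $\sigma=\sigma$, and keeping in mind that the degenerate case $\dim H\le 1$ is where uniqueness genuinely fails.
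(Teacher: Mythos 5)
Your proof is correct, but it follows a genuinely different route from the paper. The paper reduces to the diagonal $\rho_{g}\left(h\right)=\sigma_{g}\left(h,h\right)$, invokes Lemma \ref{ii} with $\alpha=2$, observes that the resulting $\lambda_{r}$ is a quadratic form on $\R^{2}$ which is even in both variables and hence equals $\upsilon\left(r\right)p^{2}+\xi\left(r\right)q^{2}$, and finally recovers $\sigma_{g}$ from $\rho_{g}$ by polarization. You instead work directly with the sesquilinear form: decompose $H=\F g\oplus g^{\bot}$, use the stabilizer of $g$ inside the isometry group to kill the mixed blocks and to identify the $g^{\bot}$-block as a scalar multiple of $\left<\cdot,\cdot\right>$ (the Schur-type step), and then use transitivity on spheres to see that the two scalars depend only on $\|g\|$. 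The paper's route is shorter because it recycles Lemma \ref{ii}; yours is self-contained, avoids polarization, and makes the representation-theoretic structure explicit. Your treatment of uniqueness, of (i), and of (ii) is essentially the same as the paper's (the paper extracts $r\psi\left(r\right)$ from $\sigma_{\sqrt{r}e}\left(f,h\right)$ with $f\bot h$, you extract $\varphi\left(t\right)$ and $\varphi\left(t\right)+t\psi\left(t\right)$ from two curves; both are fine since $t>0$). One wording slip worth fixing: to get $\sigma_{g}\left(u,v\right)=0$ you cannot use literally ``the same reflection'' ($-1$ on $g^{\bot}$, $1$ on $\F g$), since that map multiplies $\sigma_{g}\left(u,v\right)$ by $\left(-1\right)\overline{\left(-1\right)}=1$ and yields nothing; you need the reflection that is $-1$ on $\F u$ and the identity on $\left(\F u\right)^{\bot}$, which fixes $g$ and $v$ while negating $u$. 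With that one-word correction the argument is complete.
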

\begin{proof}
Sufficiency is obvious; let us show necessity. It is easy to see that $\rho_{g}\left(h\right)=\sigma_{g}\left(h,h\right)$ satisfies the conditions of Lemma \ref{ii} with $\alpha=2$, and so there is a (unique) function $\lambda:R\times\R^{2}\to\R$, such that $\lambda_{r}\left(-p, q\right)=\lambda_{r}\left(p, -q\right)=\lambda_{r}\left(p, q\right)$ and $\lambda_{r}\left(tp, tq\right)=t^{2}\lambda_{r}\left(p,q\right)$, for any $r\in R, p,q\in\R, t\ge 0$, and also $$\rho_{g}\left(h\right)=\lambda_{\|g\|}\left(\left|\left<h,g\right>\right|,\sqrt{\|h\|^{2}\|g\|^{2}-\left|\left<h,g\right>\right|^{2}}\right),$$ for every $g\in G, h\in H$.

We again may assume that $\dim H> 1$ and fix two orthogonal unit vectors $e,f\in H$. Then $\lambda_{r}\left(p,q\right)=\frac{1}{r^{2}}\rho_{re}\left(pe+qf\right)=\sigma_{re}\left(pe+qf,pe+qf\right)$, and so $\lambda_{r}$ is a quadratic form on $\R^{2}$. Since $\lambda_{r}$ is also even in both of its variables, it is easy to show that there are real numbers $\upsilon\left(r\right)$ and $\xi\left(r\right)$, such that $\lambda_{r}\left(p,q\right)=\upsilon\left(r\right)\left|p\right|^{2}+\xi\left(r\right)\left|q\right|^{2}$. Define $\varphi\left(r\right)=r\xi\left(\sqrt{r}\right)$ and $\psi\left(r\right)=\upsilon\left(\sqrt{r}\right)-\xi\left(\sqrt{r}\right)$, for $r\in R$. For $g\in G$ and $h\in H$ we have that
\begin{eqnarray*}
\rho_{g}\left(h\right)&=&\lambda_{\|g\|}\left(\left|\left<h,g\right>\right|,\sqrt{\|h\|^{2}\|g\|^{2}-\left|\left<h,g\right>\right|}\right)^{2}\\
&=&\upsilon\left(\|g\|\right)\left|\left<h,g\right>\right|^{2}+\xi\left(\|g\|\right)\left(\|h\|^{2}\|g\|^{2}-\left|\left<h,g\right>\right|^{2}\right)=\varphi\left(\|g\|^{2}\right)\|h\|^{2}+\psi\left(\|g\|^{2}\right)\left|\left<f,g\right>\right|^{2}
\end{eqnarray*}
Since $\rho_{g}$ is a quadratic form, it uniquely determines the corresponding conjugate-symmetric sesquilinear form. Thus the main statement follows.\\

(i): If $\sigma_{g}$ is positive definite, then $0<\sigma_{g}\left(f,f\right)=\varphi\left(\|g\|^{2}\right)\|f\|^{2}+\psi\left(\|g\|^{2}\right)\left|\left<f,g\right>\right|^{2}$, for any $f,g$. Hence,
$0< \varphi\left(\|g\|^{2}\right)+\psi\left(\|g\|^{2}\right)\left|\left<f,g\right>\right|^{2}$, for any $g\in G$ and any unit vector $f$. The quantity $\left|\left<f,g\right>\right|^{2}$ can have any value from $0$ to $\|g\|^{2}$, and so our condition is reduced to $0< \varphi\left(\|g\|^{2}\right)$ and $0< \varphi\left(\|g\|^{2}\right)+\psi\left(\|g\|^{2}\right)\|g\|^{2}$, or $\varphi\left(r\right)> 0$ and $\varphi\left(r\right)+r\psi\left(r\right)> 0$. Reversing the implications we get that these conditions are also sufficient for positive definiteness of $\sigma_{g}$.

(ii): If $\varphi$ and $\psi$ satisfy a certain smoothness condition, so does $\sigma$, since it is expressed through $\varphi$ and $\psi$. Conversely, if $\sigma$ is smooth, by taking $f\bot h$ and letting $g=\sqrt{r}e$ for a unit $e$ not orthogonal to $f,h$ we get that\\
$r\psi\left(r\right)\left<f,e\right>\left<e,h\right>=\sigma_{\sqrt{r}e}\left(f,h\right)$, and so $\psi$ is smooth, while for $f=h=e$ and $g=\sqrt{r}e$, we get that $\varphi\left(r\right)=\sigma_{\sqrt{r}e}\left(e,e\right)-r\psi\left(r\right)$ is also smooth.
\end{proof}

\begin{remark}
We can partially extend this proposition to the case when $0\in G$. From Proposition \ref{zero} there is $b\in\R$, such that $\rho_{0}\left(h\right)=b\|h\|$, for every $h\in H$. Hence, for $\varphi\left(0\right)=b$ and \textbf{any} value of $\psi\left(0\right)$ the statement follows. Obviously, the positive definiteness of $\sigma_{0}$ is equivalent to $b>0$. Note that it is not clear how to extend the part (ii).
\end{remark}

\begin{remark}\label{dg2}
The non-strict analogues of the strict inequalities in part (i) correspond to the positive \textbf{semi-definiteness} of $\sigma_{g}$. In particular, if $\varphi\left(r\right)=-r\psi\left(r\right)$, for every $r\in R$, then $\sigma$ glues elements that are scalar multiples of each other, i.e. factorizes by the $\F$-lines. The case when $\varphi\left(r\right)=0$ leads to the situation described in Remark \ref{dg}.
\end{remark}

\begin{remark}\label{k}
If $\F=\C$, then $\sigma$ is a Kaehler metric if and only if $\psi=\varphi'$. In this case $\omega\circ\|\cdot\|^2$ is the potential of this metric, where $\omega'=\varphi$. See \cite{xz} and \cite{zhong} for further details.
\end{remark}

\section{Congruency-invariant Finsler metrics}\label{2}

We start with the case when an isometry-invariant Finsler metric is also invariant with respect to a homotethy with a coefficient $\alpha\in\left(0,1\right)\bigcup\left(1,+\8\right)$. Observe that in this case $\alpha R\subset R$, and if $0\in R$, then $\rho_{0}\equiv 0$. Using Corollary \ref{theta} and defining $\vartheta\left(r,\tau\right)=r\theta\left(r,\tau\right)$ we obtain the following characterization.

\begin{proposition}\label{h}
An isometry-invariant Finsler metric $\rho$ on $H\backslash\left\{0\right\}$ is invariant with respect to a homotethy with a coefficient $\alpha$ if and only if there is a (unique)  function $\vartheta:R\times\left[0,\ph\right]\to\R$, such that $\vartheta\left(\exp\left(\cdot\right),\cdot\right)$ is periodic in the first variable with the period $\ln\alpha$ and $\rho_{g}\left(h\right)=\frac{\|h\|}{\|g\|}\vartheta\left(\|g\|,\angle \left(g,h\right)\right)$, for $g\in G, h\in H\backslash \left\{0\right\}$.
\end{proposition}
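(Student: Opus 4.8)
The plan is to deduce everything from Corollary \ref{theta}, so that the only real work is translating the homothety invariance into a functional equation for $\vartheta$. Since $\rho$ is isometry-invariant on $H\backslash\left\{0\right\}$, here $R=\left(0,+\8\right)$, and Corollary \ref{theta} supplies a unique $\theta:R\times\left[0,\ph\right]\to\R$ with $\rho_{g}\left(h\right)=\|h\|\theta\left(\|g\|,\angle\left(g,h\right)\right)$. Putting $\vartheta\left(r,\tau\right)=r\theta\left(r,\tau\right)$ (legitimate since $r>0$ on $R$) yields at once $\rho_{g}\left(h\right)=\frac{\|h\|}{\|g\|}\vartheta\left(\|g\|,\angle\left(g,h\right)\right)$, and the uniqueness of $\vartheta$ is inherited from that of $\theta$. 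Thus the representation formula is automatic, and it remains to show that the homothety invariance is equivalent to the asserted periodicity of $\vartheta\left(\exp\left(\cdot\right),\cdot\right)$.

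For the equivalence I would first note that the homothety $T:h\mapsto\alpha h$ (with $\alpha>0$, $\alpha\ne 1$, as in the surrounding discussion) is an invertible self-map of $H\backslash\left\{0\right\}$, so the condition $TG\subset G$ is vacuous and invariance reads $\rho_{\alpha g}\left(\alpha h\right)=\rho_{g}\left(h\right)$ for all nonzero $g,h$. The two facts I need are that the acute angle is scale-invariant, $\angle\left(\alpha g,\alpha h\right)=\angle\left(g,h\right)$, and that the norm is positively homogeneous, $\|\alpha g\|=\alpha\|g\|$. Substituting these into the representation gives $\rho_{\alpha g}\left(\alpha h\right)=\frac{\alpha\|h\|}{\alpha\|g\|}\vartheta\left(\alpha\|g\|,\angle\left(g,h\right)\right)=\frac{\|h\|}{\|g\|}\vartheta\left(\alpha\|g\|,\angle\left(g,h\right)\right)$, so after cancelling the nonzero factor $\frac{\|h\|}{\|g\|}$ the invariance equation becomes $\vartheta\left(\alpha\|g\|,\angle\left(g,h\right)\right)=\vartheta\left(\|g\|,\angle\left(g,h\right)\right)$.

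Next I would observe that (assuming $\dim H>1$, the case $\dim H\le 1$ being degenerate as in Lemma \ref{ii}) the pair $\left(\|g\|,\angle\left(g,h\right)\right)$ runs over all of $R\times\left[0,\ph\right]$ as $g,h$ range over the nonzero vectors. Hence the invariance is equivalent to the multiplicative identity $\vartheta\left(\alpha r,\tau\right)=\vartheta\left(r,\tau\right)$ for every $r\in R$ and $\tau\in\left[0,\ph\right]$. Finally, writing $r=\exp\left(s\right)$ and using $\alpha\exp\left(s\right)=\exp\left(s+\ln\alpha\right)$ turns this into $\vartheta\left(\exp\left(s+\ln\alpha\right),\tau\right)=\vartheta\left(\exp\left(s\right),\tau\right)$, which is precisely the statement that $\vartheta\left(\exp\left(\cdot\right),\cdot\right)$ has period $\ln\alpha$ in its first argument. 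Since each step is reversible, the converse holds too.

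I do not anticipate a genuine obstacle here: once Corollary \ref{theta} is available the argument is a direct computation. The only point needing a little care is verifying that $\left(\|g\|,\angle\left(g,h\right)\right)$ actually attains every value in $R\times\left[0,\ph\right]$, which is what upgrades the functional equation for $\vartheta$ from a mere consequence of the invariance to an equivalent reformulation of it.
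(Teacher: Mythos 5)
Your proposal is correct and follows exactly the route the paper intends: the paper states this proposition as an immediate consequence of Corollary \ref{theta} via the substitution $\vartheta\left(r,\tau\right)=r\theta\left(r,\tau\right)$, offering no further proof, and your computation simply fills in the routine verification (scale-invariance of the acute angle, cancellation of $\frac{\|h\|}{\|g\|}$, surjectivity of $\left(\|g\|,\angle\left(g,h\right)\right)$ onto $R\times\left[0,\ph\right]$, and the change of variables $r=\exp\left(s\right)$). The one point you flag as needing care --- that every pair $\left(r,\tau\right)$ is attained when $\dim H>1$ --- is indeed the only substantive step, and you handle it correctly.
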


Let us deal with the situation when an isometry-invariant Finsler metric is invariant with respect to all homotheties, not just one. Clearly, then $G$ is either $H$ or $H\backslash\left\{0\right\}$ and if $G=H$, then $\rho_{0}\equiv 0$. Furthermore, the following characterization holds.

\begin{proposition} Let $\rho$ be a Finsler metric on $H\backslash\left\{0\right\}$. The following are equivalent:
\item[(i)] $\rho$ is invariant with respect to all congruencies;
\item[(ii)] there is a (unique) function $\vartheta:\left[0,\ph\right]\to\R$ such that $\rho_{g}\left(h\right)=\frac{\|h\|}{\|g\|}\vartheta\left(\angle \left(g,h\right)\right)$, for every $g, h\in H\backslash \left\{0\right\}$;
\item[(iii)] there is a (unique) positive-homogenous function $\lambda:\R\times\R\to\R$, which is even in both variables such that  $\rho_{g}\left(h\right)=\frac{1}{\|g\|^{2}}\lambda\left(\left|\left<h,g\right>\right|,\sqrt{\|h\|^{2}\|g\|^{2}-\left|\left<h,g\right>\right|^{2}}\right)$, for every $g, h\in H\backslash \left\{0\right\}$.

Moreover, if we additionally assume that $\rho$ is continuous in the first variable, then the above conditions are equivalent to
\item[(iv)] $\rho$ is invariant with respect to all isometries and two homotheties such that the logarithms of their coefficients are not commensurable.
\end{proposition}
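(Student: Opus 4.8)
The plan is to prove the cycle $(i)\Leftrightarrow(ii)\Leftrightarrow(iii)$ straight from the analytic descriptions already available, and then to add $(iv)$ as a refinement that needs the continuity hypothesis at exactly one point. First I would record the reduction that makes the congruencies manageable: a congruency is a product $cT$ with $c\in\F\setminus\{0\}$ and $T$ an isometry, and since multiplication by a scalar of modulus $1$ (in the real case, by $-1$) is itself an isometry, invariance under all congruencies is the same as invariance under all isometries together with all homotheties $g\mapsto\alpha g$ with $\alpha>0$. Granting isometry invariance, Corollary \ref{theta} gives $\rho_g(h)=\|h\|\,\theta(\|g\|,\angle(g,h))$, and because the acute angle is scale invariant while $\|\alpha h\|=\alpha\|h\|$, invariance under the homothety of coefficient $\alpha$ is equivalent to $\alpha\,\theta(\alpha r,\tau)=\theta(r,\tau)$ for all $r>0$ and $\tau\in[0,\ph]$ (using $\dim H>1$ so that every such pair $(r,\tau)$ is realized; the case $\dim H\le 1$ is degenerate with all angles equal to $0$). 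Setting $r=1$ turns this into $\theta(r,\tau)=\tfrac1r\vartheta(\tau)$ with $\vartheta(\tau):=\theta(1,\tau)$, which is exactly $(ii)$; conversely $\tfrac{\|h\|}{\|g\|}\vartheta(\angle(g,h))$ is visibly unchanged under $g,h\mapsto cTg,cTh$, giving $(ii)\Rightarrow(i)$.

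Next I would treat $(ii)\Leftrightarrow(iii)$ as a change of variables against the $\lambda$-description of Proposition \ref{ui}. Writing $p=|\langle h,g\rangle|$ and $q=\sqrt{\|h\|^2\|g\|^2-|\langle h,g\rangle|^2}$, one has $\sqrt{p^2+q^2}=\|h\|\,\|g\|$ and $\cos\angle(g,h)=p/\sqrt{p^2+q^2}$, so the assignments $\lambda(p,q)=\sqrt{p^2+q^2}\,\vartheta\bigl(\cos^{-1}\tfrac{|p|}{\sqrt{p^2+q^2}}\bigr)$ and $\vartheta(\tau)=\lambda(\cos\tau,\sin\tau)$ convert the two displayed formulas into one another; positive homogeneity and evenness of $\lambda$ are precisely what make $\vartheta$ a well-defined function on $[0,\ph]$. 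Uniqueness in both $(ii)$ and $(iii)$ is clear, since $\vartheta(\tau)$ is recovered as $\rho_g(h)$ for unit vectors $g,h$ at angle $\tau$.

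For $(iv)$, the implication $(i)\Rightarrow(iv)$ is immediate, since congruence invariance gives invariance under all homotheties and incommensurable pairs such as $(\ln 2,\ln 3)$ exist. The substantive direction is $(iv)\Rightarrow(ii)$ under continuity. Assuming invariance under all isometries and under two homotheties of coefficients $\alpha,\beta>0$ with $\ln\alpha/\ln\beta\notin\Q$, I would pass to the logarithmic section $\Theta(u,\tau):=e^u\theta(e^u,\tau)$, that is, the quantity $r\theta(r,\tau)$ of Proposition \ref{h} written in the variable $u=\ln r$. The single-homothety computation behind Proposition \ref{h} shows that $\Theta(\cdot,\tau)$ is periodic with period $\ln\alpha$ and with period $\ln\beta$, while continuity of $\rho$ in the first variable yields continuity of $u\mapsto\Theta(u,\tau)$ (evaluate $\rho$ along $g=re$ and $h=\cos\tau\,e+\sin\tau\,f$ for a fixed orthonormal pair $e,f$). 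The periods of $\Theta(\cdot,\tau)$ then form a subgroup of $(\R,+)$ containing the dense subgroup $\ln\alpha\,\Z+\ln\beta\,\Z$, and a continuous function with a dense group of periods is constant; hence $\Theta(\cdot,\tau)$ is independent of $u$, i.e. $\theta(r,\tau)=\tfrac1r\vartheta(\tau)$, which is $(ii)$.

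I expect the last step to be the only real obstacle: upgrading invariance under two homotheties to invariance under all of them. It rests on the classical density of $\ln\alpha\,\Z+\ln\beta\,\Z$ in $\R$ when $\ln\alpha/\ln\beta$ is irrational, combined with the transfer of the hypothesis ``continuous in the first variable'' to continuity of the sections $u\mapsto\Theta(u,\tau)$. Continuity is genuinely needed: a non-constant function can carry both periods (for instance the indicator of $\ln\alpha\,\Z+\ln\beta\,\Z$), yielding a $\rho$ invariant under the two homotheties but not under all of them, which is why $(iv)$ is equivalent to the other conditions only under the extra continuity assumption.
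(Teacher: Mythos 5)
Your proposal is correct and follows essentially the same route as the paper: reduce congruence invariance to isometry invariance plus positive homotheties, use the $\theta$/$\vartheta$ description from Corollary \ref{theta} and Proposition \ref{h} to turn homothety invariance into periodicity of $\vartheta(\exp(\cdot),\cdot)$, pass between (ii) and (iii) via $\vartheta(\tau)=\lambda(\cos\tau,\sin\tau)$, and for (iv) invoke the density of $\ln\alpha\,\Z+\ln\beta\,\Z$ (Kronecker) together with continuity. Your added remarks (the explicit functional equation $\alpha\theta(\alpha r,\tau)=\theta(r,\tau)$ solved at $r=1$, and the indicator-function example showing continuity is genuinely needed in (iv)) are correct elaborations of what the paper leaves implicit.
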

\begin{proof}
First, note that (ii)$\Rightarrow$(i) is obvious, while (i)$\Leftrightarrow$(ii) follows from putting\\ $\vartheta\left(\tau\right)=\lambda\left(\cos \tau,\sin \tau\right)$. Now assume that $\rho$ is an isometry-invariant Finsler metric on $H\backslash\left\{0\right\}$. From the discussion before the previous proposition, there is a (unique) function $\vartheta:R\times\left[0,\ph\right]\to\R$, such that $\rho_{g}\left(h\right)=\frac{\|h\|}{\|g\|}\vartheta\left(\|g\|,\angle \left(g,h\right)\right)$, for $g, h\in H\backslash \left\{0\right\}$.

(i)$\Rightarrow$(ii): From the previous proposition we get that the function $\vartheta\left(\exp\left(\cdot\right),\cdot\right)$ is periodic in the first variable with the period equal to every non-zero real number. Thus, $\vartheta$ does not depend on the first variable, i.e. $\vartheta=\vartheta\left(\cdot\right)$, and $\rho_{g}\left(h\right)=\frac{\|h\|}{\|g\|}\vartheta\left(\angle \left(g,h\right)\right)$, for every $g, h\in H\backslash \left\{0\right\}$.

(iv)$\Rightarrow$(ii): It is easy to see that the continuity of $\rho$ in the first variable implies the continuity of $\vartheta$ in the first variable. Assume that $\rho$ is invariant with respect to homotethety with coefficients $\alpha$ and $\beta$. From the previous proposition we get that the function $\vartheta\left(\exp\left(\cdot\right),\cdot\right)$ is periodic in the first variable with the periods $\ln \alpha$ and $\ln \beta$. Then $n\ln \alpha+m\ln \beta$ are also periods, for any $m,n\in\Z$. If $\ln \alpha$ and $\ln \beta$ are not commensurable, by Kroneker's theorem the latter numbers densely fill the real line, and by continuity we obtain that $\vartheta$ is a constant with respect to the first variable.
\end{proof}

\begin{remark}\label{pr2}
It is easy to come up with the refinements of remarks \ref{pr} and \ref{dg} for this case. Also, it follows that $\rho$ blows up to infinity as $g$ approaches $0$, unless $\rho\equiv 0$.
\end{remark}

\begin{corollary}
Let $G=H\backslash \left\{0\right\}$ and let $\sigma$ be as in Proposition \ref{rm1}. Then $\sigma$ is invariant with respect to all congruencies if and only if there are (unique) $a,b\in\R$, such that $\sigma_{g}\left(f,h\right)=\frac{a}{\|g\|^{2}}\left<f,h\right>+\frac{b}{\|g\|^{4}}\left<f,g\right>\left<g,h\right>$, for $g\in G$, and $f,h\in H$.
\end{corollary}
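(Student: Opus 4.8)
The plan is to combine Proposition \ref{rm1}, which describes all isometry-invariant $\sigma$ via two functions $\varphi,\psi:R\to\R$, with the congruence-invariance characterization. Since $G=H\backslash\{0\}$ we have $R=(0,+\8)$, and the associated Finsler metric $\rho^{\sigma}_{g}(h)=\sigma_{g}(h,h)$ satisfies $\rho^{\sigma}_{g}(rh)=|r|^{2}\rho^{\sigma}_{g}(h)$, i.e. it fits the $\alpha=2$ framework rather than the genuine Finsler ($\alpha=1$) framework of the preceding proposition. So I cannot quote that proposition verbatim; instead I would impose congruence-invariance directly on the analytic form from Proposition \ref{rm1} and read off the constraints on $\varphi$ and $\psi$.

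First I would note that since $\sigma$ is determined by its diagonal, $\sigma$ is congruence-invariant if and only if $\rho^{\sigma}_{g}(h)=\sigma_{g}(h,h)$ is invariant with respect to all congruencies $\mu T$ (with $\mu\neq 0$ and $T$ an isometry). Isometry-invariance is already built into the form given by Proposition \ref{rm1}, so the new content is invariance under every homothety $h\mapsto \mu h$, $g\mapsto \mu g$. Substituting the formula $\sigma_{g}(f,h)=\varphi(\|g\|^{2})\langle f,h\rangle+\psi(\|g\|^{2})\langle f,g\rangle\langle g,h\rangle$ and using $\langle \mu f,\mu g\rangle=\mu\overline{\mu}\langle f,g\rangle=|\mu|^{2}\langle f,g\rangle$ together with $\|\mu g\|^{2}=|\mu|^{2}\|g\|^{2}$, I would compute
\begin{eqnarray*}
\sigma_{\mu g}(\mu f,\mu h)&=&\varphi\bigl(|\mu|^{2}\|g\|^{2}\bigr)|\mu|^{2}\langle f,h\rangle+\psi\bigl(|\mu|^{2}\|g\|^{2}\bigr)|\mu|^{4}\langle f,g\rangle\langle g,h\rangle.
\end{eqnarray*}
Setting this equal to $\sigma_{g}(f,h)$ for all $f,h$ and using the uniqueness of the two coefficient functions (the terms $\langle f,h\rangle$ and $\langle f,g\rangle\langle g,h\rangle$ are linearly independent as forms, which one sees by choosing $f,h$ orthogonal to $g$ versus $f=h=g$, exactly as in part (ii) of Proposition \ref{rm1}) yields the two scalar functional equations $\varphi(|\mu|^{2}r)|\mu|^{2}=\varphi(r)$ and $\psi(|\mu|^{2}r)|\mu|^{4}=\psi(r)$ for all $r>0$ and all $\mu\neq 0$.

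Next I would solve these functional equations. Writing $s=|\mu|^{2}$, which ranges over all of $(0,+\8)$, the equations read $\varphi(sr)=s^{-1}\varphi(r)$ and $\psi(sr)=s^{-2}\psi(r)$ for all $r,s>0$. Taking $r=1$ gives $\varphi(s)=\varphi(1)s^{-1}$ and $\psi(s)=\psi(1)s^{-2}$; conversely these functions clearly satisfy the homogeneity relations. Setting $a=\varphi(1)$ and $b=\psi(1)$ and substituting $\varphi(\|g\|^{2})=a\|g\|^{-2}$ and $\psi(\|g\|^{2})=b\|g\|^{-4}$ back into the formula from Proposition \ref{rm1} produces exactly
$$\sigma_{g}(f,h)=\frac{a}{\|g\|^{2}}\langle f,h\rangle+\frac{b}{\|g\|^{4}}\langle f,g\rangle\langle g,h\rangle,$$
as required. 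Uniqueness of $a,b$ follows from uniqueness of $\varphi,\psi$ in Proposition \ref{rm1}.

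I do not expect a serious obstacle here: the only point requiring any care is confirming that invariance under all congruencies reduces to invariance under all homotheties (given that isometry-invariance is already encoded), and that the two tensor terms can be separated so the vector functional equation splits into two scalar ones. Both are routine, the former because a congruence factors as a homothety composed with an isometry, and the latter by the same orthogonality trick used in Proposition \ref{rm1}(ii). The remainder is the elementary observation that a function on $(0,+\8)$ satisfying $f(sr)=s^{-k}f(r)$ for all $s,r$ is forced to be a constant multiple of $r^{-k}$.
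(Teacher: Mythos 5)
Your proof is correct, and since the paper states this corollary without proof, your direct derivation (imposing homothety invariance on the form from Proposition \ref{rm1}, splitting the two tensor terms by the orthogonality trick, and solving the resulting homogeneity equations $\varphi(sr)=s^{-1}\varphi(r)$, $\psi(sr)=s^{-2}\psi(r)$) is exactly the intended routine argument. You are also right to flag that the preceding proposition on congruence-invariant Finsler metrics cannot be quoted verbatim because $\sigma_g(h,h)$ is degree-$2$ homogeneous; working directly from Proposition \ref{rm1} is the clean way around this.
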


\begin{remark}
Combining the previous corollary with Proposition \ref{rm1} and Remark \ref{k} we get the following. The positive definiteness of $\sigma_{\cdot}$ is equivalent to $a+b>0$. If $\F=\C$, the latter contradicts to the necessary condition for $\sigma$ to be Kaehler, which is reduced to $a=-b$. Thus, there is no Kaehler metrics on $H\backslash\left\{0\right\}$ invariant with respect to all congruencies.
\end{remark}

\begin{example}\label{fs}
Let $\sigma_{g}\left(f,h\right)=\frac{1}{\|g\|^{2}}\left<f,h\right>-\frac{1}{\|g\|^{4}}\left<f,g\right>\left<g,h\right>$. By the above remark, this is the unique (up to scalar multiplication) "degenerate Kaehler metrics" which is invariant with respect to all congruencies. Using Remark \ref{dg2} one can show that it is also the unique (up to scalar multiplication) "degenerate Kaehler metrics" which factorizes by the $\F$-lines. Since $\sigma$ is the pull-back of the classical Fubiny-Study metric on the projective space $PH$ via the natural quotient map, we find it natural to call $\sigma$ the Fubini-Study metric on  $H\backslash\left\{0\right\}$. Note, that $2\log\|\cdot\|$ is the potential of this metric. Inspired by \cite{arsw} and following \cite{kob}, we will define two congruency-invariant pseudodistances on $H\backslash\left\{0\right\}$ and show that $\sigma$ is the "intrinsification" of them. For $g,h\in H\backslash\left\{0\right\}$ define $$\delta_{1}\left(g,h\right)=\sin\angle\left(g,h\right)=\sqrt{1-\frac{\left|\left<g,h\right>\right|^{2}}{\|g\|^{2}\|h\|^{2}}},~\delta_{2}\left(g,h\right)=\sin\frac{\angle\left(g,h\right)}{2}=\sqrt{2-2\frac{\left|\left<g,h\right>\right|}{\|g\|\|h\|}}.$$ While the geometric meaning of $\delta_{1}$ is obvious, $\delta_{2}$ is the distance between the intersections of the $\F$-lines defined by $g,h$ lines and the unit sphere. For the real case it follows from the low of sines, or from $\delta_{2}\left(g,h\right)=\|\frac{g}{\|g\|}\pm\frac{h}{\|h\|}\|$, where the sign depends on the acuteness of the angle between $g$ and $h$. For the complex case it follows from  $$\inf\limits_{t,s\in\R}\left\|\frac{e^{it}g}{\|g\|}-\frac{e^{is}h}{\|h\|}\right\|=\inf\limits_{t,s\in\R}\sqrt{2-2\re e^{i\left(t-s\right)}\frac{\left<g,h\right>}{\|g\|\|h\|}}=\delta_{2}\left(g,h\right).$$

For the real case the intrinsification of $\delta_{2}$ locally is the arc length of the projection on the unit sphere. Since $\delta_{2}\le\delta_{1}$ and $\delta_{1}$ does not exceed the normalized arc length, the intrinsification of $\delta_{1}$ locally is also the normalized arc length. Now, if $\gamma:\left[a,b\right]\to H\backslash\left\{0\right\}$ is a smooth curve, then
 \begin{eqnarray*}
 \left\|\left(\frac{\gamma}{\|\gamma\|}\right)'\right\|^{2}=\left\|\frac{\|\gamma\|\gamma'-\|\gamma\|'\gamma}{\|\gamma\|^2}\right\|^{2}=\left\|\frac{\|\gamma\|\gamma'-\sqrt{\left<\gamma,\gamma\right>}'\gamma}{\|\gamma\|^2}\right\|^{2}=\frac{1}{{\|\gamma\|^4}}\left\|\|\gamma\|\gamma'-\frac{\left<\gamma,\gamma\right>'}{2\sqrt{\left<\gamma,\gamma\right>}}\gamma\right\|^{2}\\
 =\frac{\left\|\|\gamma\|^{2}\gamma'-\left<\gamma,\gamma'\right>\gamma\right\|^{2}}{{\|\gamma\|^{6}}}=\frac{\left<\|\gamma\|^{2}\gamma'-\left<\gamma,\gamma'\right>\gamma,\|\gamma\|^{2}\gamma'-\left<\gamma,\gamma'\right>\gamma\right>}{{\|\gamma\|^{6}}}\\
 =\frac{\|\gamma\|^{4}\|\gamma'\|^{2}+\left|\left<\gamma,\gamma'\right>\right|^{2}\|\gamma\|^{2}-2\left|\left<\gamma,\gamma'\right>\right|^{2}\|\gamma\|^{2}}{{\|\gamma\|^{6}}}=\frac{\|\gamma\|^{2}\|\gamma'\|^{2}-\left|\left<\gamma,\gamma'\right>\right|^{2}}{\|\gamma\|^{4}}=\sigma_{\gamma}\left(\gamma',\gamma'\right),
 \end{eqnarray*}
and so the normalized length of $\gamma$ is $\int\limits_{a}^{b}\left\|\left(\frac{\gamma\left(t\right)}{\|\gamma\left(t\right)\|}\right)'\right\|dt=\int\limits_{a}^{b}\sqrt{\sigma_{\gamma\left(t\right)}\left(\gamma'\left(t\right),\gamma'\left(t\right)\right)}dt$.
For the complex case, using the first-order Tailor expansion for a $\Co^{1}$ curve $\gamma$, one can prove that $$\lim\limits_{s\to t}\frac{\delta_{1}\left(\gamma\left(s\right),\gamma\left(t\right)\right)}{\left|t-s\right|}=\lim\limits_{s\to t}\frac{\delta_{2}\left(\gamma\left(s\right),\gamma\left(t\right)\right)}{\left|t-s\right|}=\sqrt{\sigma_{\gamma\left(s\right)}\left(\gamma'\left(t\right),\gamma'\left(t\right)\right)},$$ and so by \cite[2.7.3]{burago}, we again arrive at the conclusion that the length of the curves with respect to $\delta_{1}$, $\delta_{2}$ and $\sigma$ coincide. Note that the proofs for the complex case also apply to the real cases.
\end{example}

The following theorem explains the importance of studying the congruency-invariant Finsler metrics.

\begin{theorem}\label{main}
If $\dim H>2$, then any symmetry of any non-zero isometry-invariant Finsler metric is a congruence.
\end{theorem}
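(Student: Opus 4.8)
The plan is to show that any symmetry $T$ of a nonzero isometry-invariant Finsler metric $\rho$ is \emph{conformal}, meaning $\|Tx\|=c\|x\|$ for a fixed $c>0$ and all $x\in H$. This is precisely the assertion that $T$ is a congruence: if $\|Tx\|=c\|x\|$ then $U:=c^{-1}T$ preserves the norm, hence by polarization the inner product, so $T=cU$ with $U$ an isometry; conversely every congruence has this form. Thus it suffices to prove that $\|Tx\|/\|x\|$ is independent of $x$. I will work from Corollary \ref{theta}, writing $\rho_{g}(h)=\|h\|\,\theta(\|g\|,\angle(g,h))$, so that the symmetry relation $\rho_{Tg}(Th)=\rho_{g}(h)$ reads
$$\|Th\|\,\theta\big(\|Tg\|,\angle(Tg,Th)\big)=\|h\|\,\theta\big(\|g\|,\angle(g,h)\big)$$
for all $g\in G\setminus\{0\}$ and $h\neq 0$; the value at $0$ is harmless by Proposition \ref{zero}.

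First I would show $T$ is injective. If $0\neq h_{0}\in\Ker T$, then $\rho_{g}(h_{0})=\rho_{Tg}(Th_{0})=\rho_{Tg}(0)=0$ for every $g\in G$; as $g$ ranges over $G$ its norm fills $R$ and, since $\dim H\geq 2$, the angle $\angle(g,h_{0})$ fills $[0,\ph]$, forcing $\theta\equiv 0$ and hence $\rho\equiv 0$, a contradiction. In particular $T\neq 0$, which rules out the scaling constant $c=0$. Next I would reduce to a positive self-adjoint $T$: taking the polar decomposition $T=U|T|$ with $U$ an isometry and $|T|=(T^{*}T)^{1/2}$, the invertible isometry $U$ is a symmetry, hence so is $|T|=U^{*}T$, and $T$ is a congruence iff $|T|$ is. So I may assume $T=T^{*}\geq 0$ is an injective symmetry with an orthonormal system of eigenvectors $e_{i}$, $Te_{i}=s_{i}e_{i}$, $s_{i}>0$, and the goal becomes: all $s_{i}$ coincide.

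The core of the argument probes with \emph{orthogonal} eigenvectors, where self-adjointness keeps the transported angle equal to $\ph$. Fix $r\in R$; for $i\neq j$ the vectors $g=re_{i}$ and $h=re_{j}$ are orthogonal, so $\angle(g,h)=\ph$, while $Tg=rs_{i}e_{i}\perp rs_{j}e_{j}=Th$, so $\angle(Tg,Th)=\ph$ as well. Substituting into the displayed relation and cancelling $r$ gives
$$s_{j}\,\theta\big(rs_{i},\ph\big)=\theta\big(r,\ph\big),\qquad i\neq j.$$
Assume the metric does not vanish on orthogonal pairs, i.e. $\theta(r,\ph)\neq 0$ for some admissible $r$. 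Then $\theta(rs_{i},\ph)\neq 0$ and $s_{j}=\theta(r,\ph)/\theta(rs_{i},\ph)$ is independent of $j$ for all $j\neq i$. This is exactly where $\dim H>2$ enters: there are at least two indices $j\neq i$, so all such $s_{j}$ coincide; repeating with a second fixed index and using that the two index sets overlap (again because there are at least three indices) forces all $s_{i}$ equal, and $T$ is a congruence.

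The main obstacle I expect is the \emph{degenerate} case $\theta(\cdot,\ph)\equiv 0$, in which $\rho$ vanishes on every orthogonal pair (this includes the metrics of Remark \ref{dg}); here the orthogonal probe is vacuous. In this regime the obstruction to conformality is only visible against \emph{non-eigenvector} directions: for instance with $g=e_{1}+e_{2}$ and $\rho_{g}(h)=\nu(\|g\|)\,|\langle h,g\rangle|$, the ratio $|\langle Th,Tg\rangle|/|\langle h,g\rangle|$ fails to depend on $\|g\|$ alone unless $s_{1}=s_{2}$, and I would extract the equality of eigenvalues by letting $g$ run over a two-dimensional cone rather than over eigenvectors and choosing an angle $\tau<\ph$ at which $\theta$ is nonzero. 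The remaining technical point is the infinite-dimensional bookkeeping: $T^{*}T$ need not possess eigenvectors, so the relations above should be read off from finite-dimensional compressions together with the spectral measure of $|T|$. Finally, the hypothesis $\dim H>2$ is essential and not an artefact: in the plane the orthogonal probe yields a single equation with no second index $j$, the overlap step collapses, and non-congruence symmetries do exist.
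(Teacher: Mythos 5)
Your overall strategy (reduce to $|T|=(T^{*}T)^{1/2}$ by polar decomposition, then probe the relation $\|Th\|\,\theta(\|Tg\|,\angle(Tg,Th))=\|h\|\,\theta(\|g\|,\angle(g,h))$ on orthogonal pairs of eigenvectors) is genuinely different from the paper's, which never diagonalizes $T$ globally: it instead takes the singular value decomposition of the restriction $T|_{E}:E\to TE$ on a single two-dimensional subspace, maps an ellipse to a circle to show $\theta(r,\tau)/r$ is independent of $r$, then uses an area/determinant argument to pin $\theta$ down to a multiple of $\sin$ and kills that case with a three-dimensional $ab=bc=ca=1$ computation. Your reduction to $|T|$ is sound (note only that $U$ in $T=U|T|$ is an isometry, not necessarily invertible, but isometry-invariance is all you use), and the non-degenerate eigenvector computation with three indices is correct as far as it goes. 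However, two of the points you flag as "obstacles" are genuine gaps, not technicalities.

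First, the degenerate case $\theta(\cdot,\ph)\equiv 0$ really occurs (Remark \ref{dg}: $\lambda_{r}(p,q)=a|p|$ gives exactly this), and your treatment of it is a sketch of one example rather than an argument; since your orthogonal probe gives no information there, this case needs a separate, fully worked argument with non-orthogonal pairs, and it is not obviously shorter than redoing the whole proof. Second, and more seriously, in infinite dimensions $|T|$ need not have \emph{any} eigenvectors (e.g.\ multiplication by $x$ on $L^{2}[1,2]$ is positive, injective, self-adjoint, not a scalar), and your proposed repair via "finite-dimensional compressions together with the spectral measure" does not work: a compression of a symmetry is not a symmetry, and an approximate-eigenvector-plus-limit argument is unavailable because the paper imposes no continuity whatsoever on $\rho$ (hence none on $\theta$). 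The paper's use of the two-dimensional singular value decomposition of $T|_{E}$ sidesteps both issues at once: it always exists, it produces orthogonal $e,f$ with $Te\perp Tf$ and $\|Te\|=\|Tf\|$ without any spectral theory, and the resulting ellipse argument treats all angles $\tau\in(0,\ph]$ uniformly, so no degenerate case ever splits off. As written, your proof is complete only for finite-dimensional $H$ and only under the non-degeneracy assumption $\theta(\cdot,\ph)\not\equiv 0$.
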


Note, that this theorem is a trivial consequence of Theorem \ref{nain}, which is a version of the present theorem for the case of a non-symmetric metric. The latter theorem is again a simple corollary of a general algebraic fact about linear groups. However, we would like to present a more hands-on proof of the present theorem, which transparently reveals its geometric meaning.\\

\emph{Proof.} \textbf{Step 0.} Let $T$ be a symmetry of a non-zero isometry-invariant $\rho$. First of all, let us ascertain that $T$ is an injection. Indeed, if $Te=0$ for some unit vector $e$, then $\rho_{g}\left(h\right)=0$, for any $h\in H$, because there is an isometry $S$, such that $Sh=\|h\|e$, and so for any $g\in G$ we have that  $$\rho_{g}\left(h\right)=\rho_{Sg}\left(Sh\right)=\|h\|\rho_{Sg}\left(e\right)=\|h\|\rho_{TSg}\left(Te\right)=\|h\|\rho_{TSg}\left(0\right)=0.$$

\textbf{Step 1.} Using Corollary \ref{theta} and defining $\vartheta\left(r,\tau\right)=\frac{1}{r}\theta\left(r,\tau\right)$ for $g\in G, h\in H\backslash \left\{0\right\}$ we get that  $\rho_{g}\left(h\right)=\|h\|\|g\|\vartheta\left(\|g\|,\angle\left(g,h\right)\right)$ . If $\|Tg\|=\|Th\|$, for some $g,h\in G\backslash\left\{0\right\}$, then
\begin{eqnarray*} \|g\|\|h\|\vartheta\left(\|g\|,\angle\left(g,h\right)\right)=\rho_{g}\left(h\right)=\rho_{Tg}\left(Th\right)=\|Tg\|\|Th\|\vartheta\left(\|Tg\|,\angle\left(Tg,Th\right)\right)\\
=\|Tg\|\|Th\|\vartheta\left(\|Th\|,\angle\left(Th,Tg\right)\right)=\rho_{Th}\left(Tg\right)=\rho_{h}\left(g\right)=\|g\|\|h\|\vartheta\left(\|h\|,\angle\left(h,g\right)\right)
\end{eqnarray*}
Hence, $\vartheta\left(\|g\|,\angle\left(g,h\right)\right)=\vartheta\left(\|h\|,\angle\left(h,g\right)\right)$, as long as $\|Tg\|=\|Th\|$.

\textbf{Step 2.} Assume that $T$ is not a congruence. Since $T$ is also an injection, there is a two dimensional subspace $E$ of $H$, such that $T$ is an injection and not a constant times an isometry from $E$ onto $TE$. Due to this fact and singular decomposition there are orthogonal vectors $e,f\in E$, such that $Te\bot Tf$, $\|Te\|=\|Tf\|=1$, but $\|e\|\ne\|f\|$. If $F=\spa_{\R}\left\{e,f\right\}$, then $TF=\spa_{\R}\left\{Te,Tf\right\}$ and the ellipse $\Delta$ with axis $e$ and $f$ is mapped into the unit circle of $TF$. Note that the inner product is real on both $F$ and $TF$, and so on these subspaces the acute angle between the vectors according to our definition coincides with the actual acute angle between them on the \textbf{real} planes that contain them.~~~~~~~~~~~~~~~~~~~~~~~~~~~~~~~~~~~~~~~~~~
\begin{wrapfigure}{l}{0pt}
\begin{tikzpicture}
\draw[-latex] (0, 0) ellipse [x radius = 3cm, y radius = 2cm,
  start angle = 30, end angle = 150];

\draw[line width=0.5pt, color=black, draw opacity=1, -latex] (0:0)--(15:2.881cm);

\draw[line width=0.5pt, color=black, draw opacity=1, -latex] (0:0)--(45:2.353cm);

\draw[line width=0.5pt, color=black, draw opacity=1, -latex] (0:0)--(-60:2.155cm);

\draw[line width=0.5pt, color=black, draw opacity=1, -latex] (0:0)--(-90:2cm);

\draw[fill] (15:3cm) circle (0pt) node {$~~~~g_{1}$};

\draw[fill] (45:2.7cm) circle (0pt) (45:2.6cm) node {$~~~~h_{1}$};

\draw[fill] (-60:2.4cm) circle (0pt) node {$~~~~h_{2}$};

\draw[fill] (-90:2.3cm) circle (0pt) node {$~~~~g_{2}$};

\draw[fill] (-180:2.7cm) circle (0pt) node {$\Delta$};

\draw [blue,thick,domain=15:45] plot ({cos(\x)}, {sin(\x)}) node {$~~~~\tau$};

\draw [blue,thick,domain=-90:-60] plot ({cos(\x)}, {sin(\x)}) node {$~~~~\tau$};

\end{tikzpicture}
\end{wrapfigure}

\noindent Fix $\tau\in\left(0,\ph\right]$. Consider all pairs of vectors $g$ and $h$ on $\Delta$ such that $\angle\left(g,h\right)=\tau$. It is clear (see the picture), that the ratio $\frac{\|g\|}{\|h\|}$ fills a certain closed interval $\left[c,d\right]$ with $c<1<d$. Since we can expand and shrink $\Delta$ arbitrarily, it follows that $\vartheta\left(s,\tau\right)=\vartheta\left(r,\tau\right)$ for any positive $s,r$ with $\frac{s}{r}\in\left[c,d\right]$. Hence, $\vartheta\left(\cdot,\tau\right)$ is locally a constant on a connected domain $\left(0,+\8\right)$. Thus, $\vartheta\left(\cdot,\tau\right)$ does not depend on the first variable, for any $\tau\in\left(0,\ph\right]$. Using the same letter $\vartheta$ for the function of one (second) variable, we get that $\rho_{g}\left(h\right)=\|g\|\|h\|\vartheta\left(\angle\left(g,h\right)\right)$, unless $g$ and $h$ are collinear.

\textbf{Step 3.} We have that $\|Tg\|\|Th\|\vartheta\left(\angle\left(Tg,Th\right)\right)=\|g\|\|h\|\vartheta\left(\angle\left(g,h\right)\right)$, for any non-collinear $g,h$. Let $S\left(g,h\right)=\sin\left(\angle\left(g,h\right)\right)\|g\|\|h\|$. This quantity is the area of the parallelogram spanned by $g,h$ in the case when $\left<g,h\right>\in\R$. The matrix $\diag\left\{\frac{1}{\|e\|}, \frac{1}{\|f\|}\right\}$ is the matrix of both $T\left|_{E}\right.$ and $T\left|_{F}\right.$ with respect to the orthobases $\frac{e}{\|e\|},\frac{f}{\|f\|}$ and $Te,Tf$. Let $D=\frac{1}{\|e\|\|f\|}>0$ be the determinant of this matrix. Then for $g,h\in\Delta$ we have that
$$\frac{S\left(g,h\right)\vartheta\left(\angle\left(g,h\right)\right)}{\sin \left(\angle\left(g,h\right)\right)}=\frac{S\left(Tg,Th\right)\vartheta\left(\angle\left(Tg,Th\right)\right)}{\sin \left(\angle\left(Tg,Th\right)\right)}=D\frac{S\left(g,h\right)\vartheta\left(\angle\left(Tg,Th\right)\right)}{\sin \left(\angle\left(Tg,Th\right)\right)},$$
and so $\frac{\vartheta\left(\angle\left(g,h\right)\right)}{\sin \left(\angle\left(g,h\right)\right)}=D\frac{\vartheta\left(\angle\left(Tg,Th\right)\right)}{\sin \left(\angle\left(Tg,Th\right)\right)}$. Again, for any fixed $\tau\in \left(0,\ph\right]$ there is an interval of angles $\omega$, such that there are $g,h\in\Delta$ with $\angle\left(g,h\right)=\omega$ and $\angle\left(Tg,Th\right)=\tau$, and consequently $\frac{\vartheta\left(\omega\right)}{\sin \left(\omega\right)}=D\frac{\vartheta\left(\tau\right)}{\sin \left(\tau\right)}$. Hence, $\frac{\vartheta}{\sin}$ is locally a constant on the connected domain $\left(0,\ph\right]$. Thus, $\frac{\vartheta}{\sin}$ is a constant on $\left(0,\ph\right]$, say $b$, and $D=1$.

\textbf{Step 4.} We have shown that the absolute value of the determinant of each restriction of $T$ on a two dimensional subspace with respect to corresponding orthobases is $1$: if this restriction is an isometry, it is automatic, otherwise the argument above applies. Consider a subspace $F$ of $H$ such that $\dim F=3$, and such that $T$ does not act on $F$ as an isometry. By the singular decomposition it is possible to find orthobases of $F$ and $TF$ such that the matrix of $T\left|_{F}\right.$ with respect to them is $\diag \left\{a,b,c\right\}$, where $a,b,c\ge 0$. Then $ab=bc=ca=1$ by our condition on the two-dimensional restrictions of $T$, and so $a=b=c=1$, which contradicts the assumption that $T$ does not act on $F$ as an isometry.\qed

\begin{remark} \label{dim2}
If $\dim H=2$, the statement of the theorem needs an adjustment, since Step 4 of the proof is not applicable to this case. From steps 2-3 of the proof either $T$ is a congruence, or $\det T= 1$ and $\rho_{g}\left(h\right)=b\|g\|\|h\|\sin\left(\angle\left(g,h\right)\right)$. Hence there is a constant, such that for any non-colinear $g,h$ we have that $\rho_{g}\left(h\right)$ is this constant times the area of the parallelogram formed by $g,h$ (determinant of $2\times 2$ matrix with columns $g,h$). In particular $\rho$ cannot be induced by $\sigma$ as in Proposition \ref{rm1}.
\end{remark}

\section{The non-symmetric case}\label{3}

The proofs of the following three propositions are omitted due to their similarity with the symmetric case. Remarks \ref{dim}, \ref{pr} and \ref{pr2} are also adaptable to this case. For non-zero vectors $g,h$ define the \emph{angle} between them by $\measuredangle \left(g,h\right)=\cos^{-1}\frac{\left<h,g\right>}{\|h\|\|g\|}$. Notice that the difference in the definitions of the angle and the acute angle is the absolute value of the inner product in the latter.

Let $P$ be $\left[0,\pi\right]$ when $\F=\R$, and $\cos^{-1}\left\{\tau\in\C,\left|\tau\right|\le 1\right\}$, for any branch of $\cos^{-1}$, when $\F=\C$.

\begin{proposition}
Let $\rho$ be a non-symmetric Finsler metric on $G\not\ni 0$. The following are equivalent:
\item[(i)] $\rho$ is isometry-invariant;
\item[(ii)] there is a (unique) function $\theta:R\times P\to\R$ such that $\rho_{g}\left(h\right)=\|h\|\theta\left(\|g\|,\measuredangle \left(g,h\right)\right)$, for every $g\in G, h\in H\backslash \left\{0\right\}$;
\item[(iii)] there is a (unique) function $\lambda:R\times\F\times\R\to\R$, such that $\lambda_{r}\left(tp, \pm tq\right)=t\lambda_{r}\left(p,q\right)$, for any $r\in R, p\in \F, q\in\R, t\ge 0$ and $\rho_{g}\left(h\right)=\lambda_{\|g\|}\left(\left<h,g\right>,\sqrt{\|h\|^{2}\|g\|^{2}-\left|\left<h,g\right>\right|^{2}}\right)$, for every $g\in G, h\in H$.
\end{proposition}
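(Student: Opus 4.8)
The plan is to mirror the treatment of the symmetric case in Lemma \ref{ii} and Corollary \ref{theta}, establishing the cycle (i)$\Rightarrow$(iii)$\Rightarrow$(i) together with the equivalence (ii)$\Leftrightarrow$(iii). The implication (iii)$\Rightarrow$(i) is the routine direction: if $\rho_{g}\left(h\right)$ is written purely through the quantities $\|g\|$, $\left<h,g\right>$ and $\sqrt{\|h\|^{2}\|g\|^{2}-\left|\left<h,g\right>\right|^{2}}$, then for any isometry $U$ one has $\|Ug\|=\|g\|$, $\|Uh\|=\|h\|$ and $\left<Uh,Ug\right>=\left<h,g\right>$, so $\rho_{Ug}\left(Uh\right)=\rho_{g}\left(h\right)$ immediately. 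The homogeneity hypotheses on $\lambda$ are simply the reflection of $\rho$ being a non-symmetric Finsler metric and are not themselves used here.

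The crux is (i)$\Rightarrow$(iii), which I would run exactly as in Lemma \ref{ii} but \emph{without} passing to absolute values. Assuming $\dim H>1$ (the case $\dim H\le 1$ being vacuous), fix orthonormal $e,f\in H$. Given $g\in G$ and $h\in H$, decompose $h=\proj_{g}h+\proj_{g^{\bot}}h$ and choose an isometry $T$ with $Tg=\|g\|e$ and $T\proj_{g^{\bot}}h=\|\proj_{g^{\bot}}h\|\,f$; such $T$ exists because both pairs are orthogonal with matching norms, so a partial isometry on $\spa\left\{g,\proj_{g^{\bot}}h\right\}$ extends to an isometry of $H$. The decisive point, and the only real departure from the symmetric argument, is that $T\proj_{g}h=\frac{\left<h,g\right>}{\|g\|}e$ retains the \textbf{full} scalar $\left<h,g\right>\in\F$ rather than its modulus, giving $Th=\frac{1}{\|g\|}\left(\left<h,g\right>e+\sqrt{\|h\|^{2}\|g\|^{2}-\left|\left<h,g\right>\right|^{2}}\,f\right)$. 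Invariance together with $\rho_{re}\left(s\,v\right)=s\,\rho_{re}\left(v\right)$ for $s\ge 0$ then yields $\rho_{g}\left(h\right)=\frac{1}{\|g\|}\rho_{\|g\|e}\left(\left<h,g\right>e+\sqrt{\|h\|^{2}\|g\|^{2}-\left|\left<h,g\right>\right|^{2}}\,f\right)$, so I would set $\lambda_{r}\left(p,q\right)=\frac{1}{r}\rho_{re}\left(pe+qf\right)$ for $p\in\F$, $q\in\R$. Positive homogeneity $\lambda_{r}\left(tp,tq\right)=t\lambda_{r}\left(p,q\right)$, $t\ge 0$, follows from $\rho_{g}\left(th\right)=t\rho_{g}\left(h\right)$, while the evenness $\lambda_{r}\left(p,-q\right)=\lambda_{r}\left(p,q\right)$ (hence the stated $\pm$) comes from the isometry fixing $e$ and sending $f\mapsto -f$, as in the symmetric proof. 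Note that there is \emph{no} evenness in the first variable, since we cannot negate the $g$-direction while preserving $g$. Uniqueness holds because the representation forces $\lambda_{r}\left(p,q\right)=\frac{1}{r}\rho_{re}\left(pe+qf\right)$ for $q\ge 0$, and the $\pm$ condition then determines $\lambda_{r}$ on all of $\R$.

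The remaining ingredient is (ii)$\Leftrightarrow$(iii), a polar-coordinate change of variables. Writing $\mu=\frac{\left<h,g\right>}{\|h\|\|g\|}$, one has $\measuredangle\left(g,h\right)=\cos^{-1}\mu$ and $\frac{\sqrt{\|h\|^{2}\|g\|^{2}-\left|\left<h,g\right>\right|^{2}}}{\|h\|\|g\|}=\sqrt{1-\left|\mu\right|^{2}}$, so homogeneity with $t=\|h\|\|g\|$ gives $\rho_{g}\left(h\right)=\|h\|\|g\|\,\lambda_{\|g\|}\left(\mu,\sqrt{1-\left|\mu\right|^{2}}\right)$, and I would define $\theta\left(r,\omega\right)=r\,\lambda_{r}\left(\cos\omega,\sqrt{1-\left|\cos\omega\right|^{2}}\right)$. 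Conversely, given $\theta$, set $\lambda_{r}\left(p,q\right)=\frac{t}{r}\theta\left(r,\cos^{-1}\frac{p}{t}\right)$ with $t=\sqrt{\left|p\right|^{2}+q^{2}}$ and $\lambda_{r}\left(0,0\right)=0$; homogeneity and evenness in $q$ are then direct.

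I expect the genuinely non-routine point, and the main obstacle, to be the complex case of (ii)$\Leftrightarrow$(iii): one must verify that $\omega\mapsto\cos\omega$ is a bijection from $P$ onto the closed unit disk $\left\{\tau\in\C:\left|\tau\right|\le 1\right\}$ for the chosen branch of $\cos^{-1}$, so that the substitution $\mu=\cos\omega$ is legitimate and $\theta$ and $\lambda$ determine each other uniquely. This is exactly why $P$ is defined as $\cos^{-1}$ of the closed unit disk, and why the angle $\measuredangle$, which records the full complex $\left<h,g\right>$ rather than its modulus, is the correct invariant to replace the acute angle $\angle$ of the symmetric theory.
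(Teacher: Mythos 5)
Your proposal is correct and is essentially the proof the paper intends: the paper omits it, deferring to the symmetric case, and your argument is exactly the adaptation of Lemma \ref{ii} and Corollary \ref{theta} with the right modification (keeping the full scalar $\left<h,g\right>$ and choosing $T$ to send $\proj_{g^{\bot}}h$ to $\|\proj_{g^{\bot}}h\|f$ without the phase factor, since only positive homogeneity is available). The bookkeeping for $(ii)\Leftrightarrow(iii)$ via the branch of $\cos^{-1}$ on $P$ is also handled as the paper's definitions anticipate.
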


\begin{proposition}\label{h}
An isometry-invariant non-symmetric Finsler metric $\rho$ on $H\backslash\left\{0\right\}$ is invariant with respect to a homotethy with a coefficient $\alpha$ if and only if there is a (unique)  function $\vartheta:R\times P\to\R$, such that $\vartheta\left(\exp\left(\cdot\right),\cdot\right)$ is periodic in the first variable with period $\ln\alpha$ and  $\rho_{g}\left(h\right)=\frac{\|h\|}{\|g\|}\vartheta\left(\|g\|,\measuredangle \left(g,h\right)\right)$, for every $g\in G, h\in H\backslash \left\{0\right\}$.
\end{proposition}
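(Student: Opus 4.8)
The plan is to bootstrap off the analytic description already recorded in the preceding proposition, reducing the extra invariance to a transparent functional equation. By that proposition there is a unique $\theta:R\times P\to\R$ with $\rho_g(h)=\|h\|\theta(\|g\|,\measuredangle(g,h))$; setting $\vartheta(r,\tau)=r\theta(r,\tau)$ rewrites this as $\rho_g(h)=\frac{\|h\|}{\|g\|}\vartheta(\|g\|,\measuredangle(g,h))$, and uniqueness of $\vartheta$ is inherited from that of $\theta$. The entire content of the proposition is then to show that homothety invariance is \emph{equivalent} to the stated periodicity of $\vartheta(\exp(\cdot),\cdot)$.

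First I would translate the invariance. A homothety with coefficient $\alpha>0$ is the map $h\mapsto\alpha h$, and its being a symmetry of $\rho$ means $\alpha G\subset G$ together with $\rho_{\alpha g}(\alpha h)=\rho_g(h)$ for all $g,h$. Plugging the formula for $\rho$ into the left-hand side, I would use that $\|\alpha g\|=\alpha\|g\|$ and $\|\alpha h\|=\alpha\|h\|$, and that the angle is scale-invariant, $\measuredangle(\alpha g,\alpha h)=\measuredangle(g,h)$. After the factor $\alpha$ cancels out of the prefactor $\frac{\|\alpha h\|}{\|\alpha g\|}=\frac{\|h\|}{\|g\|}$, the invariance collapses to $\vartheta(\alpha\|g\|,\measuredangle(g,h))=\vartheta(\|g\|,\measuredangle(g,h))$.

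Next I would extract the functional equation on the whole domain. For fixed $r\in R$, letting $g$ range over the sphere of radius $r$ and $h$ over $H\setminus\{0\}$, the pair $(\|g\|,\measuredangle(g,h))$ covers all of $\{r\}\times P$ (the quantity $\frac{\langle h,g\rangle}{\|h\|\|g\|}$ fills the closed unit disk by Cauchy--Schwarz), so the displayed identity is equivalent to $\vartheta(\alpha r,\tau)=\vartheta(r,\tau)$ for every $r\in R$ and $\tau\in P$. Here $\alpha R\subset R$ is exactly the condition $\alpha G\subset G$, so the equation is well posed. Substituting $r=\exp(s)$ converts multiplication by $\alpha$ into translation by $\ln\alpha$, which is precisely the assertion that $\vartheta(\exp(\cdot),\tau)$ is periodic with period $\ln\alpha$; and the converse is obtained by reading the same chain of equalities backwards.

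Finally, the main point requiring care is not the computation but its two domain-theoretic ingredients. One is the scale-invariance of the angle in the \emph{complex} non-symmetric setting, where $\measuredangle$ is $\cos^{-1}$ of a generally complex number: I must check that replacing $(g,h)$ by $(\alpha g,\alpha h)$ leaves $\frac{\langle h,g\rangle}{\|h\|\|g\|}$ literally unchanged --- which it does, since scaling multiplies numerator and denominator by the same positive factor $\alpha^2$ --- so no branch ambiguity of $\cos^{-1}$ intervenes. The other is the realizability claim that $(\|g\|,\measuredangle(g,h))$ sweeps out $R\times P$, together with the bookkeeping $\alpha R\subset R$; these guarantee that the pointwise invariance of $\rho$ and the periodicity of $\vartheta$ are genuinely equivalent, rather than one merely implying the other on a proper subdomain.
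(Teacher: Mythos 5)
Your argument is correct and is exactly the route the paper intends: the paper omits this proof, noting it follows from the preceding analytic description $\rho_g(h)=\|h\|\theta(\|g\|,\measuredangle(g,h))$ by setting $\vartheta(r,\tau)=r\theta(r,\tau)$ and observing that homothety invariance translates into $\vartheta(\alpha r,\tau)=\vartheta(r,\tau)$, i.e.\ periodicity of $\vartheta(\exp(\cdot),\tau)$ with period $\ln\alpha$. Your added checks (scale-invariance of $\measuredangle$ in the complex case and the surjectivity of $(\|g\|,\measuredangle(g,h))$ onto $R\times P$) are exactly the points the paper leaves implicit, so nothing is missing.
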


\begin{proposition} Let $\rho$ be a non-symmetric Finsler metric on $H\backslash\left\{0\right\}$. The following are equivalent:
\item[(i)] $\rho$ is invariant with respect to all congruencies;
\item[(ii)] there is a (unique) function $\vartheta: P\to\R$ such that $\rho_{g}\left(h\right)=\frac{\|h\|}{\|g\|}\vartheta\left(\measuredangle \left(g,h\right)\right)$, for every $g, h\in H\backslash \left\{0\right\}$;
\item[(iii)] there is a (unique) positive-homogenous function $\lambda:\F\times\R\to\R$, which is even in the second variable such that  $\rho_{g}\left(h\right)=\frac{1}{\|g\|^{2}}\lambda\left(\left<h,g\right>,\sqrt{\|h\|^{2}\|g\|^{2}-\left|\left<h,g\right>\right|^{2}}\right)$, for every $g, h\in H\backslash \left\{0\right\}$.

Moreover, if we additionally assume that $\rho$ is continuous in the first variable, then the above conditions are equivalent to
\item[(iv)] $\rho$ is invariant with respect to all isometries and two homotheties such that the logarithms of their coefficients are not commensurable.
\end{proposition}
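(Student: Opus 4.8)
The plan is to replicate, \emph{mutatis mutandis}, the argument used above for the symmetric congruency-invariant metrics, paying attention only to the points where the genuinely $\F$-valued angle $\measuredangle$ enters. Throughout I may assume $\dim H>1$, the case $\dim H\le 1$ being trivial exactly as in Lemma \ref{ii}. First, (ii)$\Rightarrow$(i) is a direct verification: every congruence can be written $T=cU$ with $c>0$ and $U$ an isometry (in the complex case the phase of the scalar is absorbed into $U$), so $\|Tg\|=c\|g\|$, $\|Th\|=c\|h\|$, and, since $U$ preserves the inner product, $\measuredangle(Tg,Th)=\measuredangle(g,h)$; substituting into the formula of (ii) the factors of $c$ cancel and $\rho_{Tg}(Th)=\rho_{g}(h)$.

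For (ii)$\Leftrightarrow$(iii) I would pass between the two representations by the substitution $\vartheta(\tau)=\lambda\!\left(\cos\tau,\sqrt{1-|\cos\tau|^{2}}\right)$. Writing $u=\frac{\langle h,g\rangle}{\|h\|\|g\|}$, so that $|u|\le 1$ and $u=\cos\measuredangle(g,h)$, positive homogeneity of $\lambda$ turns the right-hand side of (iii) into $\frac{1}{\|g\|^{2}}\lambda\!\left(\|h\|\|g\|u,\|h\|\|g\|\sqrt{1-|u|^{2}}\right)=\frac{\|h\|}{\|g\|}\lambda\!\left(u,\sqrt{1-|u|^{2}}\right)=\frac{\|h\|}{\|g\|}\vartheta(\measuredangle(g,h))$, which is (ii); conversely $\lambda$ is recovered on the closed unit disk $\overline{\D}$ from $\vartheta$ and extended by homogeneity, its evenness in the second variable being forced by the fact that the second slot is always a nonnegative square root. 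Uniqueness in both (ii) and (iii) follows because, as $g,h$ range over $H\backslash\{0\}$, the quantity $u$ sweeps all of $\overline{\D}$ (here $\dim H>1$ is used), so $\vartheta$, respectively the restriction of $\lambda$ to $\overline{\D}$, is pinned down pointwise.

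The substantive implication is (i)$\Rightarrow$(ii). Since the isometries are the congruences with $c=1$, a congruency-invariant $\rho$ is in particular isometry-invariant, so by the isometry-invariant description stated above there is a unique $\vartheta:R\times P\to\R$ with $\rho_{g}(h)=\frac{\|h\|}{\|g\|}\vartheta(\|g\|,\measuredangle(g,h))$, where now $R=(0,+\8)$. Congruency-invariance forces invariance under every homothety, so by the preceding proposition $\vartheta(\exp(\cdot),\cdot)$ is periodic in its first variable with period $\ln\alpha$ for every $\alpha>0$; having every nonzero real as a period, $\vartheta$ cannot depend on its first variable, which is (ii). Finally, for (iv)$\Rightarrow$(ii) I would note that continuity of $\rho$ in the first variable transfers to continuity of $\vartheta$ in the first variable, that invariance under the two homotheties $\alpha,\beta$ makes $\ln\alpha,\ln\beta$ — hence all $n\ln\alpha+m\ln\beta$ with $m,n\in\Z$ — periods, and that by Kronecker's theorem these are dense in $\R$ once $\ln\alpha,\ln\beta$ are incommensurable; continuity then forces $\vartheta$ to be constant in the first variable, recovering (ii).

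The only place demanding real care — and the main obstacle — is the complex case of the (ii)$\Leftrightarrow$(iii) bookkeeping: one must check that the branch ambiguity hidden in $P=\cos^{-1}(\overline{\D})$ is immaterial, precisely because $\vartheta$ enters only through $\cos\tau=u$, so that the passage to a function of $u$ on $\overline{\D}$ is well defined and bijective regardless of the chosen branch. Everything else is a transcription of the symmetric argument, with $|\langle h,g\rangle|$ replaced by $\langle h,g\rangle$ and the evenness in the first variable dropped.
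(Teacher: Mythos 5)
Your proposal is correct and follows essentially the same route as the paper: the paper omits this proof precisely because it is the transcription of the symmetric-case argument (reduction to the isometry-invariant normal form $\rho_{g}(h)=\frac{\|h\|}{\|g\|}\vartheta(\|g\|,\measuredangle(g,h))$, then killing the dependence on $\|g\|$ via periodicity of $\vartheta(\exp(\cdot),\cdot)$ for all, respectively two incommensurable, periods plus Kronecker and continuity), and that is exactly what you carried out, with the correct substitution $\vartheta(\tau)=\lambda\bigl(\cos\tau,\sqrt{1-|\cos\tau|^{2}}\bigr)$ replacing $\lambda(\cos\tau,\sin\tau)$ and the right attention to the branch of $\cos^{-1}$ in the complex case.
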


\begin{theorem}\label{nain}
If $\dim H>2$, then any symmetry of any non-zero non-symmetric isometry-invariant Finsler metric is a congruence.
\end{theorem}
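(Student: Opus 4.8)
The plan is to turn the metric invariance into a constraint on the group of symmetries of $\rho$ and then to invoke a structural fact about linear groups. Two reductions are field-independent. First, $T$ is injective: as in Step~0 of the proof of Theorem~\ref{main}, if $Te=0$ for a unit vector $e$ then choosing for each $h$ an isometry $S$ with $Sh=\|h\|e$ yields $\rho_{g}\left(h\right)=\|h\|\rho_{TSg}\left(0\right)=0$ for all $g,h$, against $\rho\neq 0$; only homogeneity for $r\geq 0$ is used, so this is valid in the non-symmetric case. Second, the symmetry monoid contains every isometry and is closed under composition, so $U_{1}TU_{2}$ is again a symmetry for all isometries $U_{1},U_{2}$; consequently the invertible symmetries form a group $\Gamma$ containing the whole isometry group, and it suffices to show that $\Gamma$ is contained in the congruence group.

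I would then reduce to a finite-dimensional subspace --- legitimate because the obstruction to $T$ being a congruence already shows up on some two- or three-dimensional subspace via singular value decomposition --- and apply the classification of closed subgroups lying between $\mathrm{O}\left(n\right)$ (resp.\ $\mathrm{U}\left(n\right)$) and $\mathrm{GL}_{n}$: the closure of a group containing the orthogonal (unitary) group and a single non-congruence must contain $\mathrm{SL}_{n}$. This is, I believe, the general algebraic fact alluded to in the text. The point is that $\mathrm{SL}_{n}$-invariance is incompatible with $\rho$ being genuinely non-zero off the diagonal: for fixed $g$ the stabilizer of $g$ in $\mathrm{SL}_{n}$ acts transitively on $\left\{h:h\notin\F g\right\}$ once $\dim H>2$, so $\rho_{g}\left(h\right)=\rho_{g}\left(2h\right)$ for non-collinear $h$, while homogeneity gives $\rho_{g}\left(2h\right)=2\rho_{g}\left(h\right)$, forcing $\rho_{g}\left(h\right)=0$. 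Hence if $\rho$ does not vanish on some non-collinear pair, $\Gamma$ can contain no non-congruence and we are done. For $\F=\R$ the same conclusion can be reached more concretely by transcribing Steps~1--4 of the proof of Theorem~\ref{main} with the full angle $\measuredangle$ in place of the acute angle, the decisive rigidity being that if $P=T^{*}T$ satisfies $\|Tu\|^{2}\|Tv\|^{2}-\left|\left<Tu,Tv\right>\right|^{2}=1$ for every orthonormal pair, then all eigenvalues of $P$ are equal in dimension at least three.

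The step I expect to be the real obstacle is the \textbf{degenerate case} in which $\rho_{g}\left(h\right)=0$ for all non-collinear $g,h$ --- the case $b=0$ in Step~3 of the proof of Theorem~\ref{main}, corresponding to a $\lambda$ concentrated on one axis as in Remark~\ref{dg}. There the determinant comparison degenerates to $0=D\cdot 0$ and imposes nothing on $T$, and the argument above only forces vanishing off the diagonal, not everywhere: a metric whose radial profile $\theta\left(\|g\|,0\right)$ is a constant multiple of $\|g\|^{-1}$ (so that $\rho_{g}\left(tg\right)$ depends on $t$ alone) is then invariant under \emph{every} injective linear map, not merely the congruences. Ruling this out seems to require either a continuity assumption --- under which, by a Remark~\ref{pr}-type argument, the radial value at angle zero equals the limit of the vanishing values at positive angles, so $\rho\equiv 0$ --- or a strengthening of the non-vanishing hypothesis that excludes metrics supported on the $\F$-lines; relatedly, one must be careful that the classification is applied to the actual symmetry group rather than to its closure. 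Identifying the correct hypothesis here is, I expect, the crux of a fully rigorous proof.
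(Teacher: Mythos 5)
Your overall strategy is the paper's own: the paper likewise proves injectivity of $T$ by the Step~0 argument, observes that the symmetry monoid $\mathcal{G}$ contains all isometries, passes to a subspace $E$ with $\infty>\dim E>2$ containing the relevant vectors and on which $T$ fails to be a congruence, invokes the de Cornulier fact that a linear group containing all rotations of a finite-dimensional inner product space is either contained in the congruence group or contains $SL\left(E\right)$ (stated for the group itself, not its closure, so that particular worry of yours does not arise in the form the paper uses), and concludes that $\mathcal{G}$ moves any non-collinear pair to any other, whence $\rho$ is constant --- and therefore, by positive homogeneity, zero --- on all non-collinear pairs. Your variant with the stabilizer of $g$ in $SL\left(E\right)$ is the same mechanism.

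The degenerate case you flag is exactly where the paper's argument is weakest, and your suspicion is justified rather than a defect of your write-up. The paper dismisses it in one clause (``\dots which is not compatible with the assumption that $\rho$ is a non-zero Finsler metric''), but bitransitivity only forces $\rho$ to vanish on \emph{non-collinear} pairs and says nothing about collinear ones. Your candidate example is genuine: taking $\rho_{g}\left(h\right)=c\,\|h\|/\|g\|$ when $h=tg$ with $t\ge 0$ and $\rho_{g}\left(h\right)=0$ otherwise (i.e.\ $\theta\left(r,0\right)=c/r$ and $\theta\left(r,\tau\right)=0$ for $\tau\ne 0$) gives a non-zero isometry-invariant non-symmetric Finsler metric on $H\setminus\left\{0\right\}$, and for any injective linear $T$ one has $h=tg\Leftrightarrow Th=tTg$ together with $\|Th\|/\|Tg\|=\|h\|/\|g\|$ along that ray, so \emph{every} injective linear map is a symmetry. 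The same phenomenon is the unhandled case $b=0$ in Step~3 of the proof of Theorem~\ref{main}, where $0=D\cdot 0$ constrains nothing. So the theorem as literally stated needs the supplementary hypothesis you propose: either that $\rho$ does not vanish identically on non-collinear pairs, or a mild regularity of $\rho_{g}$ (continuity, or subadditivity as in Remark~\ref{dg}), either of which propagates the off-diagonal vanishing to the collinear pairs and forces $\rho\equiv 0$. With such a hypothesis your argument, like the paper's, closes; without it, you have correctly located a counterexample rather than a gap in your own proof.
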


While if $\F=\R$ the proof of Theorem \ref{main} can be adapted to the non-symmetric case (we would just have to consider all angles, not just acute ones), for the complex case we face an insurmountable obstacle of non-symmetry of the inner product. Therefore we give a completely different proof.

\begin{proof}
Assume that $T$ is a symmetry of a non-zero non-symmetric isometry-invariant Finsler metric $\rho$, which is not a congruence. Replicating Step 0 of the proof of Theorem \ref{main}, we can show that $T$ has to be injective. Let $\mathcal{G}$ be the monoid of all symmetries of $\rho$. We know, that it contains all isometries and $T$. Our goal is to show that $\mathcal{G}$ acts bitransitively on $H$, in the sense that it can map any pair of non-collinear vectors into any other pair. It is clear that a function, invariant with respect to a bitransitive action has to be a constant on the set of all non-collinear pairs, which is not compatible with the assumption that $\rho$ is a non-zero Finsler metric. Let $f_{1},g_{1}$ and $f_{2},g_{2}$ be two non-collinear pairs. Fix a subspace $E$ of $H$, such that:
\begin{itemize}
\item $f_{1},g_{1},f_{2},g_{2}\in E$;
\item $\8>\dim E>2$;
\item $T$ does not act like a congruence from $E$ into $TE$.
\end{itemize}
Let $\mathcal{G}\left|_{E}\right.=\mathcal{G}\bigcap GL\left(E\right)$, i.e. the group of the \textbf{invertible} restrictions on $E$ of the elements of $\mathcal{G}$ that fix $E$. We will use an apparently well-known fact that any linear group that contains the group of rotations of a finite-dimensional inner product space is either contained in the group of congruencies, or contains the special linear group. Since $\mathcal{G}$ contains all isometries of $H$, it follows that $\mathcal{G}\left|_{E}\right.$ contains all rotations of $E$. Let $S$ be an isometry that sends $TE$ back to $E$. Then $ST\left|_{E}\right.\in \mathcal{G}\left|_{E}\right.$, since\\ $ST\left(E\right)\subset E$, and both $T,S\in\mathcal{G}$ are an injections. However $ST\left|_{E}\right.$ is not a congruence on $E$ by the construction of $E$. Hence, $\mathcal{G}\left|_{E}\right.$ is not contained in the group of congruencies, and so $SL\left(E\right)\subset \mathcal{G}\left|_{E}\right.$. It is easy to see that any $n$-tuple of linearly independent vectors in $E$ can be mapped into any other $n$-tuple by a transformation from $SL\left(E\right)$, where $n<\dim E$. Hence, there is an element of $\mathcal{G}$, whose restriction maps $f_{1},g_{1}$ into $f_{2},g_{2}$. Since the latter pairs were arbitrary, the bitransitivity of the action of $\mathcal{G}$ follows.
\end{proof}

\begin{remark}
If $\dim H=2$, $SL\left(H\right)$ does not act bitransitively anymore. Instead, it can map any pair of vectors into any other with the same determinant. Since $\mathcal{G}$ also contains all matrix of the form  $\diag\left\{a,1\right\}$, where $\left|a\right|=1$, we end up with the same conclusion as in the Remark \ref{dim2}.
\end{remark}

\section{Acknowledgements}

The author wants to thank: his supervisor Nina Zorboska for an indispensable help on every stage of the preparation of this paper; Yves de Cornulier, who has drawn the author's attention to the fact that the proof of Theorem \ref{nain} is based upon and the service \href{http://mathoverflow.net/}{MathOverflow} which made it possible.
\begin{bibsection}
\begin{biblist}
\bib{ap}{book}{
   author={Abate, Marco},
   author={Patrizio, Giorgio},
   title={Finsler metrics---a global approach},
   series={Lecture Notes in Mathematics},
   volume={1591},
   note={With applications to geometric function theory},
   publisher={Springer-Verlag, Berlin},
   date={1994},
   pages={x+180},
}
\bib{arsw}{article}{
   author={Arcozzi, N.},
   author={Rochberg, R.},
   author={Sawyer, E.},
   author={Wick, B. D.},
   title={Distance functions for reproducing kernel Hilbert spaces},
   conference={
      title={Function spaces in modern analysis},
   },
   book={
      series={Contemp. Math.},
      volume={547},
      publisher={Amer. Math. Soc., Providence, RI},
   },
   date={2011},
   pages={25--53},
}
\bib{burago}{book}{
   author={Burago, Dmitri},
   author={Burago, Yuri},
   author={Ivanov, Sergei},
   title={A course in metric geometry},
   series={Graduate Studies in Mathematics, 33},
   publisher={American Mathematical Society},
   place={Providence, RI},
   date={2001},
   pages={xiv+415},
}
\bib{kob}{article}{
   author={Kobayashi, Shoshichi},
   title={Geometry of bounded domains},
   journal={Trans. Amer. Math. Soc.},
   volume={92},
   date={1959},
   pages={267--290},
}
\bib{mr}{article}{
   author={McCarthy, Patrick J.},
   author={Rutz, Solange F.},
   title={The general four-dimensional spherically symmetric Finsler space},
   journal={Gen. Relativity Gravitation},
   volume={25},
   date={1993},
   number={6},
   pages={589--602},
}
\bib{xz}{article}{
   author={Xia, Hongchuan},
   author={Zhong, Chunping},
   title={A classification of unitary invariant weakly complex Berwald
   metrics of constant holomorphic curvature},
   journal={Differential Geom. Appl.},
   volume={43},
   date={2015},
   pages={1--20},
}
\bib{zhong}{article}{
   author={Zhong, Chunping},
   title={On unitary invariant strongly pseudoconvex complex Finsler
   metrics},
   journal={Differential Geom. Appl.},
   volume={40},
   date={2015},
   pages={159--186},
}
\bib{zhou}{article}{
   author={Zhou, Linfeng},
   title={Spherically symmetric Finsler metrics in $R^n$},
   journal={Publ. Math. Debrecen},
   volume={80},
   date={2012},
   number={1-2},
   pages={67--77},
}		
\end{biblist}
\end{bibsection}

\end{document}